\documentclass[11pt]{article}
\usepackage{amsmath,amsthm,amssymb,amsfonts,mathrsfs,color}
\usepackage{a4wide}
\usepackage{verbatim}

\numberwithin{equation}{section}

\setlength{\textheight}{21cm} \setlength{\textwidth}{15cm}
\setlength{\parindent}{0.4cm} \setlength{\topmargin}{0cm}
\setlength{\oddsidemargin}{0.8cm} \setlength{\evensidemargin}{0.8cm}

\begin{document}

\newtheorem{theorem}{Theorem}
\newtheorem{proposition}{Proposition}
\newtheorem{lemma}{Lemma}
\newtheorem{corollary}{Corollary}
\newtheorem{definition}{Definition}
\newtheorem{remark}{Remark}
\newcommand{\tex}{\textstyle}

\newcommand{\R}{\mathbb{R}}
\newcommand{\ren}{\mathbb{R}^N}
\newcommand{\re}{\mathbb{R}}
\newcommand{\n}{\nabla}
\newcommand{\p}{\partial}
\newcommand{\iy}{\infty}
\newcommand{\pa}{\partial}
\newcommand{\fp}{\noindent}
\newcommand{\ms}{\medskip\vskip-.1cm}
\newcommand{\mpb}{\medskip}
\newcommand{\AAA}{{\bf A}}
\newcommand{\BB}{{\bf B}}
\newcommand{\CC}{{\bf C}}
\newcommand{\DD}{{\bf D}}
\newcommand{\EE}{{\bf E}}
\newcommand{\FF}{{\bf F}}
\newcommand{\GG}{{\bf G}}
\newcommand{\oo}{{\mathbf \omega}}
\newcommand{\Am}{{\bf A}_{2m}}
\newcommand{\CCC}{{\mathbf  C}}
\newcommand{\II}{{\mathrm{Im}}\,}
\newcommand{\RR}{{\mathrm{Re}}\,}
\newcommand{\eee}{{\mathrm  e}}
\newcommand{\LL}{L^2_\rho(\ren)}
\newcommand{\LLL}{L^2_{\rho^*}(\ren)}
\renewcommand{\a}{\alpha}
\newcommand{\g}{\gamma}
\newcommand{\G}{\Gamma}
\renewcommand{\d}{\delta}
\newcommand{\D}{\Delta}
\newcommand{\e}{\varepsilon}
\newcommand{\var}{\varphi}
\renewcommand{\l}{\lambda}
\renewcommand{\o}{\omega}
\renewcommand{\O}{\Omega}
\newcommand{\s}{\sigma}
\renewcommand{\t}{\tau}
\renewcommand{\th}{\theta}
\newcommand{\z}{\zeta}
\newcommand{\wx}{\widetilde x}
\newcommand{\wt}{\widetilde t}
\newcommand{\noi}{\noindent}
\newcommand{\uu}{{\bf u}}
\newcommand{\xx}{{\bf x}}
\newcommand{\yy}{{\bf y}}
\newcommand{\zz}{{\bf z}}
\newcommand{\aaa}{{\bf a}}
\newcommand{\cc}{{\bf c}}
\newcommand{\jj}{{\bf j}}
\newcommand{\UU}{{\bf U}}
\newcommand{\YY}{{\bf Y}}
\newcommand{\HH}{{\bf H}}
\newcommand{\GGG}{{\bf G}}
\newcommand{\VV}{{\bf V}}
\newcommand{\ww}{{\bf w}}
\newcommand{\vv}{{\bf v}}
\newcommand{\hh}{{\bf h}}
\newcommand{\di}{{\rm div}\,}
\newcommand{\ii}{{\rm i}\,}
\newcommand\bR{{\mathbb R}}
\def\I{{\rm Id}}
\newcommand{\inA}{\quad \mbox{in} \quad \ren \times \re_+}
\newcommand{\inB}{\quad \mbox{in} \quad}
\newcommand{\inC}{\quad \mbox{in} \quad \re \times \re_+}
\newcommand{\inD}{\quad \mbox{in} \quad \re}
\newcommand{\forA}{\quad \mbox{for} \quad}
\newcommand{\whereA}{,\quad \mbox{where} \quad}
\newcommand{\asA}{\quad \mbox{as} \quad}
\newcommand{\andA}{\quad \mbox{and} \quad}
\newcommand{\withA}{,\quad \mbox{with} \quad}
\newcommand{\orA}{,\quad \mbox{or} \quad}
\newcommand{\atA}{\quad \mbox{at} \quad}
\newcommand{\onA}{\quad \mbox{on} \quad}
\newcommand{\ef}{\eqref}
\newcommand{\mc}{\mathcal}
\newcommand{\mf}{\mathfrak}
\newcommand{\ssk}{\smallskip}
\newcommand{\LongA}{\quad \Longrightarrow \quad}
\def\com#1{\fbox{\parbox{6in}{\texttt{#1}}}}
\def\N{{\mathbb N}}
\def\A{{\cal A}}
\newcommand{\de}{\,d}
\newcommand{\eps}{\varepsilon}
\newcommand{\be}{\begin{equation}}
\newcommand{\ee}{\end{equation}}
\newcommand{\spt}{{\mbox spt}}
\newcommand{\ind}{{\mbox ind}}
\newcommand{\supp}{{\mbox supp}}
\newcommand{\dip}{\displaystyle}
\newcommand{\prt}{\partial}
\renewcommand{\theequation}{\thesection.\arabic{equation}}
\renewcommand{\baselinestretch}{1.1}
\newcommand{\Dm}{(-\D)^m}



\title{Asymptotic limit of linear parabolic equations with spatio-temporal degenerated potentials}

\author{Pablo \`Alvarez-Caudevilla, Matthieu Bonnivard, Antoine Lemenant}

\date{\today}


\maketitle

\begin{abstract} 
In this paper, we observe how the heat equation in a non-cylindrical domain can arise as the asymptotic limit of a parabolic problem in a cylindrical domain, by adding a potential that vanishes outside the limit domain. This can be seen as a parabolic version of a previous work by the first and last authors, concerning the stationary case~\cite{PaAn}. We provide a strong convergence result for the solution by use of energetic methods and $\Gamma$-convergence technics. Then, we establish an exponential decay estimate coming from an adaptation of an argument due to B. Simon.  \end{abstract}

\tableofcontents


\section{Introduction}
For   $\Omega \subset \R^N$   open and  $T>0$, we define the cylinder $Q_T=\O\times (0,T)$. Let   $\lambda >0$ be a positive real parameter. For  $f_\lambda \in L^2(Q_T)$, $g_\l \in H_0^1(\Omega)$ and $a:Q_T \to \R^+$ a bounded measurable function, we consider the solution $u_\lambda$ of the parabolic problem
$$
(P_\lambda)
\left\{
\begin{array}{ll}
\partial_t u -\Delta u + \lambda a(x,t) u = f_\lambda & \hbox{in}\quad Q_T,\\
u =0 & \text{on}\quad \partial \Omega \times (0,T), \\
u(x,0)=g_\lambda(x) & \hbox{in}\quad \O.
\end{array}
\right.
$$
Since $(P_\l)$ is a classical parabolic problem, existence and regularity of solutions follow from standard theory well developed in the literature (see Section \ref{solP1}).  In particular, under our assumptions, $u\in L^2(0,T;H^1_0(\Omega))$ is continuous in time  with values in $L^2(\Omega)$ (thus the initial condition $u(x,0)=g_\lambda(x)$  is well defined   in $L^2(\O)$) and the equation is satisfied in a weak sense (see  Section \ref{solP1} for  an exact formulation).

In this paper we are interested in the limit  of $u_\lambda$ when $\lambda \to +\infty$. In particular, we 
 assume  spatial and temporal degeneracies for the potential a, which means that  
\be
\label{zeroset}
O_a:={\rm Int}\big(\{(x,t) \in Q_T \; : \; a(x,t)=0\}\big)\not = \emptyset.
\ee
We also assume that  $\partial O_a$ has zero Lebesgue measure.

In order to describe the results of this paper, let us start with  elementary observations.  Assume that, when $\lambda$ goes to $+\infty$, $f_\lambda$ converges  to $f$ and $g_\lambda$ converges to  
$g$, for instance  in $L^2$. Assume also that $u_\lambda$ converges weakly in 
$L^2(Q_T)$ to some $u\in L^{2}(Q_T)$.

Under those assumptions it is not very difficult  to get the following a priori bound using the equation in $(P_\lambda)$ (see  Lemma \ref{lem1})
\begin{eqnarray}
\lambda \int_{Q_T} a u^2_{\lambda} \;dxdt \leq C. \label{standardBound}
\end{eqnarray}
This shows that $u_\lambda$ converges strongly to $0$ in any set  of the form $\{ a(x,t)>\varepsilon\}$, for any $\varepsilon >0$.

Then, multiplying the equation in $(P_\lambda)$ by any   $\varphi \in C^{\infty}_0(O_a)$ we get, after some integration by parts (in this paper we shall denote $\nabla$ for $\nabla_x$, i.e. the gradient in space),
$$\int_{Q_T} u_\lambda \partial_t \varphi - \int_{Q_T}  u_\lambda\Delta \varphi  =\int_{Q_T} f_\lambda \varphi.$$ 
Passing to the limit, we obtain that  $\partial_t u -\Delta u   = f $ in $\mathcal{D}'(O_a)$.  Under  some suitable extra assumptions on the potential $a$, we will actually be able to prove that    the limit $u$ satisfies the following more precise problem:

 $$
(P_\infty)
\left\{
\begin{array}{l}
u \in L^2(0,T;H^1_0(\Omega)), \qquad u' \in L^2(0,T ; L^2(\Omega))  \\
u=0 \text{ a.e.  in }  Q_T \setminus O_a\\ 
\int_{Q_T} (u' v +\nabla u \nabla v )  = \int_{Q_T} {\color{blue} f} v ,\; \\
\hspace{2cm}   \text{ for all } v  \in L^2(0,T;H^1_0(\Omega)) \text{ s.t. } v=0 \text{ a.e. in }Q_T \setminus O_a\\
u(x,0)=g(x) \quad  \text{ in }  \O.
\end{array}
\right.
$$

 
 Problem $(P_\infty)$, which arises here naturally as the limit problem associated with the family of problems $(P_\lambda)$, is a non standard Cauchy-Dirichlet problem for the heat equation since 
$O_a$ may, in general, not be cylindrical. This type of heat equation in a noncylindrical domain appears in many applications, and different approaches have been developed recently to solve problems related to $(P_\infty)$  (see for e.g.  \cite{boudin,Lieb,wang,novaga,DuPeng,savare} and the references therein). As a byproduct to our work, we have obtained an existence and uniqueness result for the problem $(P_\infty)$ (see Corollary \ref{existence}).

Furthermore, in this paper we study in more detail the  convergence of $u_\lambda$, when $\lambda$ goes to infinity.   
Our first result gives a sufficient condition on the potential $a$, for which the convergence of $u_\lambda$ to $u$ is stronger than a weak $L^2$ convergence. Indeed, assuming a monotonicity condition on the potential $a$, and using purely energetic and variational methods,  we 
obtain that the convergence holds strongly in $L^2(0,T;H^1(\Omega))$; see Section 5. Our approach can be seen as the continuation of a previous work \cite{PaAn}, where the stationary problem was studied using 
the theory of $\Gamma$-convergence,  as well as in \cite{PJ2} using a different analysis.

Here is our first main result.

\begin{theorem} \label{main1} For all $\lambda >0$, let $u_\lambda$ be the solution of $(P_\lambda)$ with $f_\lambda \in L^2(\Omega\times (0,T))$ and $g_\l \in H_0^1(\Omega)$. Assume that $a:\Omega \times [0,T]\to \R^+$ is a Lipschitz function which satisfies 
\begin{eqnarray}
\partial_t a(x,t) \leq 0\text{ a.e. in } Q_T. \label{monotonic}
\end{eqnarray}
Assume also that the initial condition $g_\lambda$ satisfies 
$$\sup_{\lambda>0}\left(\lambda \int_{\Omega} a(x,0)  g_\lambda(x)^2 dx\right)<+\infty,$$
converges weakly to $g$ in $L^2(\Omega)$, and that  $f_\l$ converges weakly to $f$ in $L^2(Q_T)$.

 Then  $u_\lambda$ converges strongly to $u$ in $L^2(0,T;H^1(\Omega))$, where $u$ is the unique  solution of $(P_\infty)$.
\end{theorem}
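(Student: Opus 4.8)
The plan is to follow the usual scheme for such singular perturbations: a priori estimates, compactness, identification of the limit through the uniqueness for $(P_\infty)$, and then an energy/variational argument upgrading the weak convergence to a strong one. I would start by testing the equation in $(P_\lambda)$ with $u_\lambda$ and integrating over $Q_T$ (this is essentially Lemma~\ref{lem1}):
\[
\tfrac12\|u_\lambda(T)\|_{L^2(\Omega)}^2+\int_{Q_T}|\nabla u_\lambda|^2+\lambda\int_{Q_T}a\,u_\lambda^2=\tfrac12\|g_\lambda\|_{L^2(\Omega)}^2+\int_{Q_T}f_\lambda u_\lambda ,
\]
which, since $f_\lambda$ and $g_\lambda$ are bounded in $L^2$, yields uniform bounds for $u_\lambda$ in $L^2(0,T;H^1_0(\Omega))\cap L^\infty(0,T;L^2(\Omega))$ as well as $\lambda\int_{Q_T}a\,u_\lambda^2\le C$, so that along a subsequence $u_\lambda\rightharpoonup u$ in $L^2(0,T;H^1_0(\Omega))$. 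Strong $L^2(Q_T)$-compactness would follow from an Aubin--Lions argument, using a weighted time-derivative bound: testing with $t\,\partial_t u_\lambda$ and discarding the term $-\tfrac\lambda2\int_{Q_T}t\,(\partial_t a)\,u_\lambda^2\ge0$ thanks to~\eqref{monotonic} gives $\sqrt t\,\partial_t u_\lambda$ bounded in $L^2(Q_T)$, hence relative compactness in $L^2((\delta,T);L^2(\Omega))$ for every $\delta>0$, while the $L^\infty(0,T;L^2(\Omega))$ bound takes care of a small initial interval. (Alternatively, strong local compactness inside $O_a$ comes for free, since there $a\equiv0$ and $\partial_t u_\lambda=\Delta u_\lambda+f_\lambda$, while on $\{a>\varepsilon\}$ the bound $\lambda\int_{Q_T}a\,u_\lambda^2\le C$ forces $u_\lambda\to0$, and a neighbourhood of $\partial O_a$ is handled via $|\partial O_a|=0$ and equi-integrability.)

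To identify the limit, note first that $\lambda\int_{Q_T}a\,u_\lambda^2\le C$ together with the weak lower semicontinuity of $v\mapsto\int_{Q_T}a\,v^2$ gives $\int_{Q_T}a\,u^2=0$, hence $u=0$ a.e.\ on $\{a>0\}$ and, using $|\partial O_a|=0$, a.e.\ on $Q_T\setminus O_a$. Then, testing $(P_\lambda)$ against any admissible $v\in L^2(0,T;H^1_0(\Omega))$ with $v=0$ a.e.\ on $Q_T\setminus O_a$, the penalization term disappears identically since $a\,v=0$ a.e., and passing to the limit yields $\int_{Q_T}(u'v+\nabla u\cdot\nabla v)=\int_{Q_T}fv$ for all such $v$; the weak formulation together with continuity in time gives the initial condition $u(\cdot,0)=g$. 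Therefore $u$ solves $(P_\infty)$, and by the uniqueness part of Corollary~\ref{existence} it does not depend on the subsequence, so that the whole family $u_\lambda$ converges weakly to $u$.

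It remains to upgrade this to strong convergence in $L^2(0,T;H^1(\Omega))$. Since $u_\lambda\to u$ strongly in $L^2(Q_T)$ and $\nabla u_\lambda\rightharpoonup\nabla u$ in $L^2(Q_T)$, it suffices to show $\int_{Q_T}|\nabla u_\lambda|^2\to\int_{Q_T}|\nabla u|^2$. The natural route is to compare the energy identity above with the one obtained by testing $(P_\infty)$ with $u$ itself (an admissible test function), namely $\tfrac12\|u(T)\|_{L^2}^2+\int_{Q_T}|\nabla u|^2=\tfrac12\|g\|_{L^2}^2+\int_{Q_T}fu$: passing to the limit in the $u_\lambda$-identity (the source term by weak--strong duality, the terminal term by lower semicontinuity once one checks $u_\lambda(T)\rightharpoonup u(T)$, and the nonnegative penalization term discarded) should lead to $\limsup_\lambda\int_{Q_T}|\nabla u_\lambda|^2\le\int_{Q_T}|\nabla u|^2$, which combined with weak lower semicontinuity closes the argument. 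The same step can be phrased variationally---this is the $\Gamma$-convergence viewpoint announced in the introduction, parallel to~\cite{PaAn}---by recasting $(P_\lambda)$ as the minimization of its associated Brezis--Ekeland functional and proving that these functionals $\Gamma$-converge, with equi-coercivity, to the one attached to $(P_\infty)$, which gives the strong convergence of the minimizers at once.

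The main obstacle is precisely this last step, and inside it the treatment of the initial time: since $g_\lambda$ converges only weakly, the term $\tfrac12\|g_\lambda\|_{L^2}^2$ in the energy balance does not pass to the limit on its own, and one has to exploit the monotonicity~\eqref{monotonic}---geometrically, the fact that the degeneracy set $\{a(\cdot,t)=0\}$ expands, i.e.\ $O_a$ grows, in time---together with the weighted initial bound $\sup_{\lambda>0}\lambda\int_\Omega a(x,0)g_\lambda^2<\infty$ (which, in particular, forces $g=0$ on $\{a(\cdot,0)>0\}$) in order to reabsorb it. A further recurring technical difficulty is the compactness near the moving boundary $\partial O_a$, which relies on the assumption $|\partial O_a|=0$.
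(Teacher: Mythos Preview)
Your overall scheme---energy bounds, compactness, identification of the limit via uniqueness, then upgrade to strong convergence---matches the paper's, but the two decisive steps are handled differently, and the obstacle you flag at the end is real: your proposed route does not close as written.

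For the time-derivative bound the paper (Lemma~\ref{lem2}) does \emph{not} weight by $t$; it tests $(P_\lambda)$ directly with $u_\lambda'$. The boundary contribution at $t=0$ is $\tfrac12\int_\Omega|\nabla g_\lambda|^2+\tfrac{\lambda}{2}\int_\Omega a(\cdot,0)\,g_\lambda^2$, and this is precisely where the hypothesis $\sup_\lambda\lambda\int_\Omega a(\cdot,0)g_\lambda^2<\infty$ is spent (together with an $H^1_0$ bound on $g_\lambda$, stated explicitly in Proposition~\ref{weakConv}). The payoff is a uniform bound on $u_\lambda'$ in all of $L^2(Q_T)$, hence $u_\lambda'\rightharpoonup u'$ in $L^2(Q_T)$ and $u_\lambda\to u$ strongly in $L^2(Q_T)$ via boundedness in $H^1(Q_T)$. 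Your weighted test $t\,u_\lambda'$ gives only $\sqrt t\,u_\lambda'\in L^2$, which is not enough for what follows.

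For the strong $L^2(0,T;H^1)$ convergence the paper does \emph{not} pass to the limit in the global energy balance; your difficulty with $\tfrac12\|g_\lambda\|_{L^2}^2$ under mere weak convergence of $g_\lambda$ is genuine and is not cured by the monotonicity of $a$ or the weighted initial bound (those were already consumed in Lemma~\ref{lem2}). Instead the paper freezes time and uses an \emph{elliptic} comparison: for a.e.\ $t$ one rewrites the equation as $-\Delta u_\lambda(t)+\lambda a(t)u_\lambda(t)=f_\lambda(t)-u_\lambda'(t)$, so $u_\lambda(t)$ minimises $v\mapsto E_\lambda(v)-2\int_\Omega v\,(f_\lambda(t)-u_\lambda'(t))$ over $H^1_0(\Omega)$. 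Since $au=0$ a.e., the limit $u(t)$ is an admissible competitor, giving
\[
\int_\Omega|\nabla u_\lambda(t)|^2-2\int_\Omega u_\lambda(t)\,(f_\lambda(t)-u_\lambda'(t))\ \le\ \int_\Omega|\nabla u(t)|^2-2\int_\Omega u(t)\,(f_\lambda(t)-u_\lambda'(t)).
\]
Integrating over $t\in(0,T)$ and using $u_\lambda\to u$ strongly in $L^2(Q_T)$ together with $f_\lambda-u_\lambda'\rightharpoonup f-u'$ weakly in $L^2(Q_T)$, the two linear terms converge to the same limit and cancel, yielding $\limsup_\lambda\int_{Q_T}|\nabla u_\lambda|^2\le\int_{Q_T}|\nabla u|^2$. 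No initial or terminal trace appears in this comparison, so the issue you identified simply does not arise. Your Brezis--Ekeland alternative might be made to work, but it too would have to confront the endpoint terms; the slicewise elliptic comparison is the missing idea.
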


\begin{remark}\label{Increasing-in-time}
 In particular,  condition \eqref{monotonic} implies that the family of sets $\Omega_a(t)\subset\Omega$, defined for $t>0$ by $\Omega_a(t):=\{x\in \Omega, (x,t)\in O_a\}$, is increasing in time for the inclusion. In that case, by a slight abuse of terminology, we will often write simply that $O_a$ is increasing in time (for the inclusion).
 \end{remark}
 
 Our second result is a quantitative convergence of $u_\lambda$ to $0$, outside $O_a$ (in other words, away from the vanishing region), with very general assumptions on  $a$  (only continuous and  $O_a \not = \emptyset$), but in the special case when $f_\lambda=0$ in $Q_T \setminus O_a$. This   is obtained using an adaptation of an argument due to Simon   \cite{SimonII}, and proves that $u_\lambda$ decays exponentially fast 
 with respect to $\lambda$ to $0$ in the region $Q_T \setminus O_a$. Compared  to the standard bound \eqref{standardBound}, this results expresses that $u_\lambda$ goes to $0$ much faster than one could expect.  We also take the opportunity of this paper to write a similar estimate for the stationary problem (see Lemma~\ref{simon} in Section 6).

\begin{theorem} \label{main2}  For all $\lambda >0$, let $u_\lambda$ be the solution of $(P_\lambda)$ with $f_\lambda \in L^2(Q_T)$ and $g_\l \in H_0^1(\overline{O_a}\cap \{t=0\})$. Assume that $f_\lambda=0$ in $Q_T \setminus O_a$. Let $a:\overline{\Omega} \times [0,T]\to \R^+$ be a continuous function for which $O_a$ is non empty. For every $\varepsilon > 0$, define  $A_\varepsilon := \{(x,t) ;\, dist((x,t),O_a)>\varepsilon\}$. Then, for every $\varepsilon>0$, there exists a constant $C>0$ such that
$$
\sup_{\lambda >0}  \left(\l   e^{c_\varepsilon \sqrt{\l } }\int_{A_{\varepsilon}}   \, u_\l^2 \;dx\right) \leq  C,
$$
where  $c_\varepsilon:= \varepsilon \min_{(x,t) \in A_{\varepsilon/2} }a(x,t).$
\end{theorem}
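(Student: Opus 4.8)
The plan is to run a weighted energy estimate of Agmon type, following Simon's exponential localization argument \cite{SimonII}: test $(P_\lambda)$ against $e^{2\phi_\lambda}u_\lambda$ for a Lipschitz weight $\phi_\lambda$, calibrated to the potential, that is flat near $O_a$ and picks up all its growth --- of order $\sqrt\lambda$ --- inside the region where $a$ is bounded below. Fix $\varepsilon>0$, write $d(x,t):=\mathrm{dist}((x,t),O_a)$ (a $1$-Lipschitz function) and $m_\varepsilon:=\min_{A_{\varepsilon/2}}a$, which we may assume positive (otherwise there is nothing to prove). Define
$$\phi_\lambda(x,t):=m_\varepsilon\sqrt\lambda\;\psi\big(d(x,t)\big),\qquad \psi(r):=\min\!\Big(\max\big(r-\tfrac\varepsilon2,\,0\big),\;\tfrac\varepsilon2\Big).$$
Then $\phi_\lambda$ is $m_\varepsilon\sqrt\lambda$-Lipschitz in $(x,t)$; it vanishes on $\{d\le\varepsilon/2\}\supset\overline{O_a}$, hence on $\mathrm{supp}\,g_\lambda$ and (since $f_\lambda=0$ off $O_a$) wherever $f_\lambda\neq0$; it is constant, equal to $m_\varepsilon\sqrt\lambda\,\varepsilon/2$, on $A_\varepsilon=\{d>\varepsilon\}$, so that $e^{2\phi_\lambda}\equiv e^{c_\varepsilon\sqrt\lambda}$ there; and it is non-constant only on the shell $S:=\{\varepsilon/2<d<\varepsilon\}$, which is contained in $A_{\varepsilon/2}$, where $a\ge m_\varepsilon$.

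\textbf{The weighted identity.} Since $f_\lambda\in L^2(Q_T)$ and $g_\lambda\in H^1_0$, parabolic regularity gives $u_\lambda\in H^1(0,T;L^2(\Omega))\cap L^2(0,T;H^2(\Omega))$, so $e^{2\phi_\lambda}u_\lambda\in L^2(0,T;H^1_0(\Omega))$ is an admissible test function. Plugging it into the weak formulation, writing the parabolic term as $u_\lambda'\,e^{2\phi_\lambda}u_\lambda=\partial_t\big(\tfrac12 u_\lambda^2 e^{2\phi_\lambda}\big)-u_\lambda^2 e^{2\phi_\lambda}\partial_t\phi_\lambda$, integrating by parts in space (no boundary term, since $u_\lambda=0$ on $\partial\Omega\times(0,T)$) and absorbing the cross term $2u_\lambda\nabla u_\lambda\cdot\nabla\phi_\lambda$ by Young's inequality, one obtains
$$\frac12\int_\Omega e^{2\phi_\lambda(\cdot,T)}u_\lambda^2(\cdot,T)\;+\;\int_{Q_T}\big(\lambda a-|\nabla\phi_\lambda|^2-\partial_t\phi_\lambda\big)\,e^{2\phi_\lambda}\,u_\lambda^2\;\le\;\frac12\int_\Omega g_\lambda^2+\int_{O_a}f_\lambda u_\lambda,$$
where on the right we used $\phi_\lambda\equiv0$ on $\mathrm{supp}\,g_\lambda$ and on $O_a$. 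By Cauchy--Schwarz and the uniform energy bound $\|u_\lambda\|_{L^2(Q_T)}\le C$ (Lemma \ref{lem1}), together with the uniform bounds on the data, the right-hand side is bounded independently of $\lambda$.

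\textbf{Sign analysis and conclusion.} Discard the first (nonnegative) term and split the space-time integral into the contributions of $\{d\le\varepsilon/2\}$, of $S$, and of $A_\varepsilon$. On $\{d\le\varepsilon/2\}$ the weight is flat and the integrand is $\lambda a\,u_\lambda^2\ge0$. On the shell $S$ one has $a\ge m_\varepsilon$ while $|\nabla\phi_\lambda|^2\le\lambda m_\varepsilon^2\le\lambda m_\varepsilon$ (using here $m_\varepsilon\le1$, ensured by the normalization $\|a\|_\infty\le1$) and $|\partial_t\phi_\lambda|\le m_\varepsilon\sqrt\lambda$, so $\lambda a-|\nabla\phi_\lambda|^2-\partial_t\phi_\lambda\ge\lambda m_\varepsilon(1-m_\varepsilon)-m_\varepsilon\sqrt\lambda\ge0$ for all large $\lambda$; this is the crux, the Lipschitz constant of $\phi_\lambda$ having been chosen precisely so that $|\nabla\phi_\lambda|^2$ stays dominated by $\lambda a$ across $S$. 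On $A_\varepsilon$, where $\phi_\lambda$ is constant and $a\ge m_\varepsilon$, the integrand equals $\lambda a\,e^{c_\varepsilon\sqrt\lambda}u_\lambda^2\ge\lambda m_\varepsilon e^{c_\varepsilon\sqrt\lambda}u_\lambda^2$. Combining, $\lambda m_\varepsilon e^{c_\varepsilon\sqrt\lambda}\int_{A_\varepsilon}u_\lambda^2\le C$ for all large $\lambda$; for $\lambda$ in any bounded interval the claimed bound is immediate from $\|u_\lambda\|_{L^2(Q_T)}\le C$. This proves the theorem, and the same computation without the time variable and the initial datum yields the stationary estimate of Lemma \ref{simon}.

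\textbf{Main difficulty.} The heart of the matter is the spatio-temporal degeneracy of $a$: since $a$ vanishes on $O_a$ and is merely continuous, it admits no positive lower bound on any neighbourhood of $O_a$, which forces $\phi_\lambda$ to be identically zero there and to concentrate all its growth inside the ``safe'' shell $S\subset A_{\varepsilon/2}$, where $a\ge m_\varepsilon$; the trade-off between the loss $|\nabla\phi_\lambda|^2$ and the gain $\lambda a$ on $S$ is exactly what pins down the $\sqrt\lambda$-rate and the exponent $c_\varepsilon=\varepsilon\min_{A_{\varepsilon/2}}a$. A secondary technical point is the treatment of the time-derivative term with a time-dependent Lipschitz weight, justified by the usual regularization in time valid for $u_\lambda\in H^1(0,T;L^2(\Omega))$.
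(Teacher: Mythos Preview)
Your argument is correct and rests on the same Agmon--Simon weighted $L^2$ estimate as the paper's proof (Lemma~\ref{simon2}). The implementations differ: the paper works by conjugation, setting $\psi=e^{\sqrt{\lambda}\rho}\eta u_\lambda$ with a cutoff $\eta$ supported in $A_\varepsilon$ and a weight $\rho=\sqrt{\delta/2}\,\mathrm{dist}(\cdot,A_{2\varepsilon}^c)$ growing \emph{into} $A_{2\varepsilon}$, then computing $\langle e^{\sqrt\lambda\rho}\psi,(\partial_t-\Delta+\lambda a)e^{-\sqrt\lambda\rho}\psi\rangle$ to extract the lower bound $\tfrac{\lambda\delta}{2}\|\psi\|^2$. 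You instead test $(P_\lambda)$ directly against $e^{2\phi_\lambda}u_\lambda$ with a globally defined weight that vanishes near $O_a$ and is constant on $A_\varepsilon$. Your route avoids the cutoff $\eta$ and the accompanying commutator term on $A_\varepsilon\setminus A_{2\varepsilon}$, which makes the bookkeeping cleaner; the paper's localization, in exchange, never needs to discuss the region $\{d\le\varepsilon/2\}$.

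One point to flag: the step ``$|\nabla\phi_\lambda|^2\le\lambda m_\varepsilon^2\le\lambda m_\varepsilon$, ensured by the normalization $\|a\|_\infty\le1$'' introduces an assumption absent from the statement, and rescaling $a\mapsto a/\|a\|_\infty$, $\lambda\mapsto\lambda\|a\|_\infty$ is \emph{not} harmless for the precise exponent (it changes $c_\varepsilon\sqrt\lambda$ by a factor $\|a\|_\infty^{-1/2}$). Moreover, if $m_\varepsilon=1$ your shell term $\lambda m_\varepsilon(1-m_\varepsilon)-m_\varepsilon\sqrt\lambda$ is never nonnegative. The robust fix --- and exactly what the paper does --- is to take the Lipschitz constant of the weight to be $\sqrt{m_\varepsilon\lambda/2}$ rather than $m_\varepsilon\sqrt\lambda$; then $|\nabla\phi_\lambda|^2\le\lambda m_\varepsilon/2\le\lambda a/2$ on $S$ unconditionally, at the cost of an exponent scaling like $\varepsilon\sqrt{m_\varepsilon}$ instead of $\varepsilon m_\varepsilon$.
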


The convergence of weak solutions of $(P_\lambda)$ was already observed in \cite{daners}  as a starting point for a more detailed analysis about the associated semigroup. This was then used   in \cite{daners} to analyse the asymptotic behaviour of a  non linear    periodic-parabolic problem of logistic type (firstly analysed by Hess \cite{Hess}) where the equation is the following, also   considered before in \cite{DuPeng},
\begin{eqnarray}
\partial_t u -\Delta u = \mu u-a(x,t) u^p,\label{logistic}
\end{eqnarray}
used in some models of population dynamics. A possible link between our Problem $(P_\lambda)$ and the non-linear equation \eqref{logistic} is coming from the fact that asymptotic limit of  the principal eigenvalue for the linear parabolic operator $\p_t-\D+\l a(x,t)$ plays a role in the dynamical behaviour of non-linear  logistic equation (cf. \cite{A-LG, daners,DuPeng,GGLS}).  We thus believe that the results and  techniques developed  
in the present paper could possibly  be used in the study of more general equations such as \eqref{logistic}.

Furthermore, another possible application of our results could be for numerical purposes. Indeed, for the ones who would be interested by computing a numerical solution of the non-cylindrical limiting problem $(P_\infty)$, one could use the   cylindrical problem $(P_\lambda)$ for a large $\lambda$, much easier to compute via standard methods. The strong convergence stated in Theorem \ref{main1}, together with the exponential rate of convergence stated in Theorem \ref{main2}, give some good estimates about the difference between those two different solutions.



\section{The stationary problem}


This section concerns only the stationary problem. In particular, throughout the section, all functions $u, a, f$, \emph{etc.}, will be functions of $x \in \Omega$ (and independent of $t$). 

We assume $\Omega \subset \mathbb{R}^N$ to  
be an open and bounded domain and  $a:\overline{\Omega} \to \mathbb{R}^+$ be a measurable and bounded  non-negative function. We suppose that   
\begin{eqnarray}
K_a:=\{x \in \overline{\Omega} ; a(x)=0 \}\subset \Omega \text{ is a closed set in } \mathbb{R}^N. \label{nicepot}
\end{eqnarray}
Moreover, we assume that 
\begin{eqnarray}
\O_a:={\rm Int}(K_a)\not = \emptyset.  \label{nontriviality}
\end{eqnarray}
Under hypothesis \eqref{nicepot} we know that 
$$H^1_0(K_a):=H^1(\R^N)\cap\{u=0 \text{ q.e. in } \R^N\setminus K_a\}=H^1(\R^N)\cap\{u=0 \text{ a.e. in } \R^N\setminus K_a\},$$
and hypothesis \eqref{nontriviality} implies that 
$$H^1_0(K_a) \not = \{0\}.$$

Notice that we are working with a functional space of the form $H^1_0(A)$, where $A$ is a closed subset of $\R^N$. Therefore, we do not claim that
$H^1_0(A)=H^1_0(\mathrm{Int}(A))$,  which is true only under more regularity assumptions on the set $A$.

Furthermore, we define the functionals  $E_\lambda$ and $E$ on $L^2(\Omega)$ as follows. 
\be\label{Def:Elambda}
E_\lambda(u)=
\left\{
\begin{array}{ll}
\int_{\Omega} |\nabla u|^2 + \lambda a u^2  \; dx  & \text{ if } u \in H^1_0(\Omega)\\
+\infty  &\text{ otherwise.}
\end{array}
\right.
\ee
$$
E(u)=
\left\{
\begin{array}{ll}
\int_{\Omega} |\nabla u|^2  \; dx & \text{ if } u \in H^1_0(K_a)\\
+\infty  &\text{ otherwise.}
\end{array}
\right.
$$
The following result was already stated and used in \cite{PaAn}. For the sake of completeness, we reproduce the proof here and refer the reader to   \cite{PaAn} for the connection of this result with $\G$-convergence and several examples.  

\begin{proposition}\label{thecor} Let  $f_\lambda \in L^2(\Omega)$ be a family of functions indexed by some real parameter $\lambda>0$ and 
 uniformly bounded in $L^2(\O)$. Moreover, 
assume that $f_\lambda$ converges to a function $f \in L^2(\Omega)$ in the weak topology of $L^2(\Omega)$, when $\lambda$ tends to $+\infty$. Then the unique solution $u_\lambda$ of the problem 
$$
(P^{s}_{\l})
\left\{
\begin{array}{l}
-\Delta  u_\lambda + \lambda a u_\lambda = f_\lambda  \\
\hspace{0.2cm} u_\lambda \in H^1_0(\Omega),
\end{array}
\right.
$$
converges strongly in $H^1(\Omega)$, when $\lambda \to +\infty$, to the unique solution of the problem
$$
(P^s_\infty)
\left\{
\begin{array}{l}
-\Delta u = f \\
\hspace{0.2cm} u \in H^1_0(K_a).
\end{array}
\right.
$$
\end{proposition}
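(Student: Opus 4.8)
The natural strategy is to use $\Gamma$-convergence of the functionals $E_\lambda$ to $E$ in the strong $L^2(\Omega)$ topology, since the solutions $u_\lambda$ of $(P^s_\lambda)$ are the minimizers of $u\mapsto E_\lambda(u)-2\int_\Omega f_\lambda u$. First I would establish a uniform energy bound: testing $(P^s_\lambda)$ against $u_\lambda$ gives $\int_\Omega |\nabla u_\lambda|^2 + \lambda\int_\Omega a u_\lambda^2 = \int_\Omega f_\lambda u_\lambda$, and combined with Poincar\'e and the uniform bound on $\|f_\lambda\|_{L^2}$ this yields $\|u_\lambda\|_{H^1_0(\Omega)}\le C$ and $\lambda\int_\Omega a u_\lambda^2 \le C$. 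Hence, up to a subsequence, $u_\lambda \rightharpoonup u_*$ weakly in $H^1_0(\Omega)$ and strongly in $L^2(\Omega)$. The bound $\lambda\int_\Omega a u_\lambda^2\le C$ forces $\int_\Omega a u_*^2 = 0$, so $u_* = 0$ a.e. on $\{a>0\}$, i.e. $u_*=0$ a.e. in $\Omega\setminus K_a$; by the characterization of $H^1_0(K_a)$ recalled in the text, $u_*\in H^1_0(K_a)$.

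Next I would identify $u_*$ as the solution of $(P^s_\infty)$ by passing to the limit in the variational formulation. For the lower bound (liminf inequality), weak $H^1$ lower semicontinuity of the Dirichlet energy and nonnegativity of $\lambda a u_\lambda^2$ give $\liminf_\lambda E_\lambda(u_\lambda) \ge \int_\Omega |\nabla u_*|^2 = E(u_*)$; together with $\int_\Omega f_\lambda u_\lambda \to \int_\Omega f u_*$ (weak times strong convergence) this shows $\liminf_\lambda\big(E_\lambda(u_\lambda)-2\int f_\lambda u_\lambda\big) \ge E(u_*)-2\int f u_*$. For the upper bound (recovery sequence / minimality transfer), given any competitor $v\in H^1_0(K_a)$, the constant sequence $v_\lambda \equiv v$ satisfies $E_\lambda(v)=\int_\Omega|\nabla v|^2 = E(v)$ since $a v^2 = 0$ a.e.; using that $u_\lambda$ minimizes the $\lambda$-functional, $E_\lambda(u_\lambda)-2\int f_\lambda u_\lambda \le E_\lambda(v)-2\int f_\lambda v \to E(v)-2\int f v$. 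Combining the two inequalities, $u_*$ minimizes $v\mapsto E(v)-2\int_\Omega f v$ over $H^1_0(K_a)$, which is precisely the weak formulation of $(P^s_\infty)$; strict convexity gives uniqueness of that minimizer, so the whole family $u_\lambda$ (not just a subsequence) converges to $u=u_*$.

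Finally, to upgrade weak $H^1$ convergence to strong $H^1$ convergence, I would use convergence of energies. Taking $v=u_*=u$ as competitor in the inequality above and combining with the liminf bound yields $E_\lambda(u_\lambda)-2\int f_\lambda u_\lambda \to E(u)-2\int f u$, hence $E_\lambda(u_\lambda)\to E(u)$ (since the linear terms converge), and in particular $\int_\Omega|\nabla u_\lambda|^2 \to \int_\Omega |\nabla u|^2$ because the term $\lambda\int a u_\lambda^2$ is squeezed to $0$. Weak convergence $\nabla u_\lambda \rightharpoonup \nabla u$ in $L^2$ together with convergence of the $L^2$ norms gives $\nabla u_\lambda \to \nabla u$ strongly in $L^2(\Omega)$, i.e. $u_\lambda\to u$ strongly in $H^1(\Omega)$. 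The main subtlety I would watch is the recovery sequence step: it is trivial here only because the competitors $v$ already live in $H^1_0(K_a)$, where $a v^2\equiv 0$ kills the penalization — the $\Gamma$-liminf part, specifically the claim that any $L^2$-limit of finite-energy sequences must vanish on $\{a>0\}$, is where hypothesis \eqref{nicepot} (ensuring $\{u=0 \text{ q.e. outside }K_a\}$ and $\{u=0\text{ a.e. outside }K_a\}$ coincide for $H^1$ functions) is genuinely needed, and is the only place that requires care.
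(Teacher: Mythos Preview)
Your proof is correct and follows essentially the same $\Gamma$-convergence strategy as the paper: uniform $H^1_0$ bound via the energy identity, identification of any $L^2$ cluster point as the minimizer of the limit functional by combining the liminf inequality with the constant recovery sequence $v_\lambda\equiv v$ (which works precisely because $av^2=0$ for $v\in H^1_0(K_a)$), uniqueness to pass from subsequences to the full family, and finally strong $H^1$ convergence via convergence of the Dirichlet norms. Your write-up is in fact slightly cleaner than the paper's, which inserts an auxiliary monotonicity argument on $\alpha(\lambda)=\min(E_\lambda-2\int f_\lambda\,\cdot)$ that is not strictly needed for the conclusion; you also make explicit the step $u_*\in H^1_0(K_a)$ (and the role of hypothesis~\eqref{nicepot} there), which the paper leaves implicit in the liminf inequality.
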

\begin{proof} This is a standard consequence of the $\Gamma$-convergence of energies $E_\l$, which relies on the fact that $u_\l$ is the unique  minimizer in $H^1_0(\Omega)$ for 
$$v\mapsto E_\l(v)-2\int_{\Omega} f_\l v,$$
whereas $u$ is the unique minimizer in $H^1_0(K_a)$ for 
$$v\mapsto E(v)-2\int_{\Omega} fv.$$

Let us write the full details of the proof. For any $\lambda>0$, let $u_\lambda$ be the   solution of $(P^{s}_\l)$. We first prove that $\{u_\lambda\}_{\lambda >0}$ is  compact in $L^2(\Omega)$. This comes from the energy equality 
$$\int_{\Omega} (|\nabla u_\lambda|^2 + \lambda a u_\lambda^2) \; dx= \int_{\Omega} f_\lambda u_\lambda \; dx,$$
which implies
$$\int_{\Omega} |\nabla u_\lambda|^2 \leq \| f_\lambda \|_{L^2(\O)} \|u_\lambda \|_{L^2(\O)}\leq C \|u_\lambda \|_{L^2(\O)}\quad \hbox{with $C$ a positive constant}.$$
Thanks to Poincar\'e's inequality we also have that
$$\|u_\lambda \|_{L^2(\O)}^2 \leq C(\Omega)\int_{\Omega} |\nabla u_\lambda|^2\; dx,$$
which finally proves that  $u_\lambda$ is  uniformly bounded in $H^1_0(\Omega)$.

Now let  $w$ be any point in the $L^2$-adherence of the family $\{u_\lambda\}_{\lambda >0}$. In other words, there exists a subsequence, still denoted by 
$u_\lambda$, converging strongly in $L^2$ to $w$. Since $u_\lambda$ 
is bounded in $H^1(\O)$, we can assume, up to extracting a further subsequence, that $u_\lambda$ converges weakly in $H^1(\O)$ to a function that must necessarily be $w$.
Now let $u$ be the solution of the limit problem $(P^s_\infty)$. Since $u_\lambda$ is a minimizer of 
\begin{equation}
\label{qu23}
u\mapsto E_\lambda(u)-2\int_{\Omega } f_\lambda u \; dx.
\end{equation}
Denoting 
$$\alpha(\lambda)=\min_{v\in H_0^1(\Omega)} E_\lambda(u)-2\int_{\Omega } f_\lambda u,$$
for any $\lambda, \lambda'>0$ and such that $\lambda <\lambda'$, we find that
$$\alpha(\lambda)\leq E_\lambda(u_{\lambda'})=\alpha(\lambda')+(\lambda-\lambda')\int_\Omega a u_{\lambda'}^2 <\alpha(\lambda'),$$
for every $u_{\lambda'}\in  H_0^1(\Omega)$. Hence, $\l \to \alpha(\l)$ is increasing and by continuity of $\l \to u_\l$ we deduce that
$$\alpha'(\l)= \lim_{\l' \to \l} \frac{|\alpha(\l)-\alpha(\l')|}{|\l-\l'|}=\int_{\Omega}a v_{\l}^2$$
and $\alpha'(\l)$ is continuous so that $\alpha \in C^1$. Now suppose that
\begin{eqnarray}
\sup_{\l >0} \alpha(\l)<+\infty, \label{stop}
\end{eqnarray}
and assume by contradiction that there exists $\lambda_0,\varepsilon>0$ such that
$$ \l \int_{\Omega} a u_\l^2 =\l \alpha'(\l) > \varepsilon >0 \quad \quad \text{ for } \l > \l_0.$$
Thus, integrating the last inequality between $\l_0$ and $\l$ we deduce that
$$\alpha(\l)\geq \alpha(\l_0)+\varepsilon \ln \left(\frac{\l}{\l_0}\right)$$
which contradicts \eqref{stop}. Consequently, since $a  u=0$ we have 
$$E_\lambda(u_\lambda)-2\int_{\Omega } f_\lambda u_\lambda \; dx \leq E_\lambda(  u)-2\int_{\Omega } f_\lambda   u \; dx=E( u)-2\int_{\Omega } f_\lambda   u \; dx.$$
Hence, letting $\lambda$ go to infinity in the previous inequality, it follows that  
\begin{eqnarray}
 E(w)-2\int_{\Omega} fw  \; dx &\leq &\liminf_{\lambda }\left(E_\lambda(u_{\lambda})-2\int_{\Omega } f_{\lambda} u_\lambda  \; dx \right) \notag \\
 &\leq &\limsup_{\lambda }\left(E_\lambda(u_{\lambda})-2\int_{\Omega } f_{\lambda} u_\lambda  \; dx \right) \notag \\
 &\leq&    E(  u)-2\int_{\Omega } f  u  \; dx, \label{limsup}
\end{eqnarray}
which shows that $w$ is a minimizer, and thus $w=  u$. By uniqueness of the adherence point, we infer that the whole sequence $u_\lambda$ converges strongly in $L^2$ to $u$ (and weakly in $H^1$).

It remains to prove the strong convergence in $H^1$. To do so, it is enough to prove 
$$\|\nabla u_\lambda\|_{L^2(\O)} \to \|\nabla u\|_{L^2(\O)}.$$   
Due to the weak convergence in $H^1(\O)$ (up to subsequences) we already have
$$ \|\nabla u\|_{L^2(\O)}\leq \liminf_{\lambda}\|\nabla u_\lambda\|_{L^2(\O)}, $$
and going  back  to \eqref{limsup}  we get the reverse inequality, with a limsup.

The proof of convergence of the whole sequence follows by uniqueness of the adherent point in $H^1(\O)$.
\end{proof}

\begin{remark}  Notice that when $u$ is a solution of $(P^s_\infty)$, then $-\Delta u=f$ only in $\mathrm{Int}(K_a)$ and $-\Delta u = 0$ in $K_a^c$. However,  in general $-\Delta u$ has a singular part   on $\partial K_a$. Typically, if $K_a$ is for instance a set of finite perimeter, then in the distributional sense in $\Omega$, 
$$-\Delta u = f {\bf 1}_{K_a} + \frac{\partial u}{\partial \nu} \mathcal{H}^{N-1}|_{\partial K_a},$$
where $\nu$ is the outer normal on $\partial K_a $ and $\mathcal{H}^{N-1}$ is the $N-1$ dimensional Hausdorff measure.
\end{remark}

As a consequence of Proposition \ref{thecor}, we easily obtain the following result.

\begin{proposition} Assume that $f_\lambda$ converges weakly to a function $f$ in $L^2(\O)$. For any $\lambda>0$, let $u_\lambda$ be the solution of Problem $(P^s_\l)$. Then, when $\lambda\rightarrow \infty$,
\begin{eqnarray}
\lambda \int_{\Omega} a u_\lambda^2 \; dx \to 0, \label{niceconv1}
\end{eqnarray}
\begin{eqnarray}
\lambda a u_\lambda \to f{\bf 1}_{\Omega \setminus K_a} +(\Delta u)|_{\partial K_a} \text{ in } \mathcal{D}'(\Omega), \label{niceconv2}
\end{eqnarray}
where $u$ is the solution of $(P^s_\infty)$. Moreover, the convergence in \eqref{niceconv2} holds in the weak-$*$ topology of $H^{-1}$.
\end{proposition}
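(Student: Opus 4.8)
The plan is to deduce both convergences from the strong $H^1$-convergence $u_\lambda\to u$ established in Proposition~\ref{thecor}, combined with the energy identity and the equation of $(P^s_\lambda)$ itself. Recall that the weak form of $(P^s_\infty)$ — the Euler--Lagrange equation for the minimization of $v\mapsto E(v)-2\int_\Omega fv$ over $H^1_0(K_a)$ — is $\int_\Omega\nabla u\cdot\nabla v=\int_\Omega fv$ for every $v\in H^1_0(K_a)$.

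To establish \eqref{niceconv1}, I would start from the energy equality $\int_\Omega|\nabla u_\lambda|^2+\lambda\int_\Omega a u_\lambda^2=\int_\Omega f_\lambda u_\lambda$ already used in the proof of Proposition~\ref{thecor}, rewritten as
$$\lambda\int_\Omega a u_\lambda^2=\int_\Omega f_\lambda u_\lambda-\int_\Omega|\nabla u_\lambda|^2 .$$
Since $u_\lambda\to u$ strongly in $H^1(\Omega)$ (hence in $L^2(\Omega)$), while $f_\lambda$ is bounded in and converges weakly in $L^2(\Omega)$ to $f$, the right-hand side converges to $\int_\Omega fu-\int_\Omega|\nabla u|^2$, which is zero by choosing $v=u$ in the weak formulation of $(P^s_\infty)$. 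As $\lambda\int_\Omega a u_\lambda^2\ge0$, this yields \eqref{niceconv1}.

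To establish \eqref{niceconv2}, I would rewrite the equation of $(P^s_\lambda)$ as $\lambda a u_\lambda=f_\lambda+\Delta u_\lambda$ in $\mathcal D'(\Omega)$ and pass to the limit. Here $\Delta u_\lambda\to\Delta u$ \emph{strongly} in $H^{-1}(\Omega)$, because $\|\Delta u_\lambda-\Delta u\|_{H^{-1}}\le\|\nabla u_\lambda-\nabla u\|_{L^2}\to0$, while $f_\lambda\rightharpoonup f$ weakly in $L^2(\Omega)$, hence weakly-$*$ in $H^{-1}(\Omega)$ via the continuous embedding $L^2(\Omega)\hookrightarrow H^{-1}(\Omega)$ (for $\varphi\in H^1_0(\Omega)$ one has $\int_\Omega f_\lambda\varphi\to\int_\Omega f\varphi$). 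Consequently $\lambda a u_\lambda\to f+\Delta u$ weakly-$*$ in $H^{-1}(\Omega)$, in particular in $\mathcal D'(\Omega)$. It then remains to identify $f+\Delta u$ with $f\mathbf{1}_{\Omega\setminus K_a}+(\Delta u)|_{\partial K_a}$: testing against $\varphi\in C^\infty_c(\mathrm{Int}(K_a))\subset H^1_0(K_a)$ gives $-\Delta u=f$ in $\mathcal D'(\mathrm{Int}(K_a))$, while $u\equiv0$ on the open set $\Omega\setminus K_a$ gives $\Delta u=0$ there; hence away from $\partial K_a$ the distribution $f+\Delta u$ coincides with the $L^2$ function $f\mathbf{1}_{\Omega\setminus K_a}$, and the remainder $(f+\Delta u)-f\mathbf{1}_{\Omega\setminus K_a}$ is a distribution supported on $\partial K_a$, which is precisely what $(\Delta u)|_{\partial K_a}$ denotes (equal to $-\partial_\nu u\,\mathcal H^{N-1}|_{\partial K_a}$ when $K_a$ has finite perimeter, as in the Remark above).

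The only genuinely delicate point is this last identification, and specifically the bookkeeping on $\partial K_a$: the function part of the limiting distribution is canonically defined only on $\Omega\setminus\partial K_a$, so expressing it as $f\mathbf{1}_{\Omega\setminus K_a}$ tacitly uses that any mass carried by $\partial K_a$ is of purely singular type and may be absorbed into the term $(\Delta u)|_{\partial K_a}$ (equivalently, the two natural representatives $f\mathbf{1}_{\Omega\setminus K_a}$ and $f\mathbf{1}_{\Omega\setminus\mathrm{Int}(K_a)}$ of that function differ only by a distribution supported on $\partial K_a$). Everything else reduces to standard weak/strong convergence arguments.
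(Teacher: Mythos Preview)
Your proof is correct and follows essentially the same strategy as the paper: both deduce \eqref{niceconv1} by passing to the limit in the energy identity using the strong $H^1$-convergence from Proposition~\ref{thecor}, and both obtain \eqref{niceconv2} by passing to the limit in the equation $\lambda a u_\lambda = f_\lambda + \Delta u_\lambda$. Your treatment of the weak-$*$ $H^{-1}$ convergence is in fact slightly more direct than the paper's (you exploit the strong $H^{-1}$-convergence of $\Delta u_\lambda$ to pass to the limit immediately, whereas the paper first establishes a uniform $H^{-1}$-bound and then invokes compactness plus uniqueness of the distributional limit), and you also spell out the identification of $f+\Delta u$ with $f\mathbf{1}_{\Omega\setminus K_a}+(\Delta u)|_{\partial K_a}$, which the paper leaves implicit.
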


\begin{proof}  Due to Proposition \ref{thecor} we know that $u_\lambda$ converges strongly in $H^1(\Omega)$ to $u$, the solution of Problem $(P^s_\infty)$. In particular, from the fact that 
$$\int_{\Omega}|\nabla u_\lambda |^2 \; dx \to \int_{\Omega }|\nabla u|^2 \; dx=\int_{\Omega} uf\; dx,$$
and
$$\int_{\Omega} u_\lambda f_\lambda \; dx \to \int_{\Omega} uf  \; dx,$$
  passing to the limit in the following energy equality  
\begin{eqnarray}
\int_{\Omega} |\nabla u_\lambda|^2 \; dx + \lambda \int_{\Omega} a u_\l^2 \; dx = \int_{\Omega} u_\lambda f_\lambda \; dx, \label{firstCo}
\end{eqnarray}
we obtain \eqref{niceconv1}. 
Next, let us now prove \eqref{niceconv2}. Thus, since $u_\lambda$ is a solution of $(P^s_\l)$ then, for every test function $\psi \in C^\infty_c(\Omega)$, after integrating by parts in $\O$ we arrive at
\begin{equation}
\label{newrefr}
\int_{\Omega} u_\lambda (-\Delta \psi) \; dx +\l \int_{\Omega} a u_\lambda \psi \; dx = \int_{\Omega}  f_\lambda \psi\; dx.
\end{equation}
Passing to the limit we obtain that  $\lambda a u_\l \to f+\Delta u$ in $\mathcal{D}'(\Omega)$. 
Now returning to \eqref{newrefr}, we can  write, for every $\psi$ satisfying $\| \psi \|_{H^1(\Omega)} \leq 1$,
$$ \left|\l \int_{\Omega} a u_\lambda \psi \; dx \right| \leq  \|f_\l\|_2 + \|\nabla u_\l\|_2 \leq C.$$
Taking the supremum in $\psi$ we get 
$$\|\lambda a u_\l\|_{H^{-1}}\leq C.$$
Therefore, $\lambda a u_\l$ is weakly-$*$ sequentially compact in $H^{-1}$ and we obtain the convergence by uniqueness of the limit in the distributional sense.
\end{proof}

\section{Existence and regularity  of solutions for $(P_\l)$}
\label{solP1}

In order to define properly a solution for $(P_\l)$, we first recall the definition of the spaces $L^p(0,T;X)$, with $X$ a Banach space, which consist of all (strongly) measurable functions (see \cite[Appendix E.5]{Ev}) $ u:[0,T]\to X$ such that  
$$\| u \|_{L^p(0,T;X)} =\left(\int_0^T \| u(t)\|_X^p dt\right)^{1/p}< +\infty,$$
for $1\leq p <+\infty$, and 
$$\| u \|_{L^\infty(0,T;X)} = \underset{t\in (0,T)}{\rm ess\;sup}  \| u(t)\|_X< +\infty.$$
  For simplicity we will sometimes  use the following notation for $p=2$ and $X=L^2(\Omega)$ :
$$\|\cdot\|_2\equiv \|\cdot\|_{L^2(0,T;L^2(\O))}.$$
We will also use the notation $u(x,t)=u(t)(x)$ for $(x,t)\in \Omega \times (0,T)$.

Next, we will denote by $u'$ the derivative of $u$ in the $t$ variable, intended in the following  weak sense: we say that $u'=v$, with 
$u, v  \in L^2(0,T;X)$ and 
$$\int_0^T \varphi'(t)u(t) dt = -\int_{0}^T\varphi(t)v(t) dt $$
for all scalar test functions $\varphi \in C^\infty_0(0,T)$.  The space $H^1(0,T;X)$ consists of all functions $u\in L^2(0,T;X)$ such that $u' \in L^2(0,T;X)$.

We will often use the following remark.

\begin{remark}   \label{abscont} By \cite[Theorem 3 page 303]{Ev}, if $u\in L^2(0,T;H^1_0(\Omega))$ and $u' \in L^2(0,T;H^{-1}(\Omega))$, then $u \in C([0,T],L^2(\Omega)$ (after possibly being redefined on a set of measure zero). Moreover the mapping $t\mapsto \|u(t)\|^2_{L^2(\Omega)}$ is absolutely continuous and  
$$\frac{d}{dt}\|u(t)\|^2_{L^2(\Omega)}=2\langle u'(t),u(t)\rangle_{L^2(\Omega)}.$$
\end{remark}


In this section we collect some useful information about the solution $u_\l$ of problem $(P_\lambda)$ coming from the classical theory of parabolic problems that can be directly found in the literature.

Firstly, existence and uniqueness of a weak solution  $u_\lambda$ for the problem 
$(P_\lambda)$ follows from  the standard Galerkin method;  see \cite[Theorems 3 and 4, Section 7.1]{Ev} for further details. 
According to this theory, a weak solution means that:
$$
(P_\l)
\left\{
\begin{array}{l}
u \in L^2(0,T;H^1_0(\Omega)), \qquad u' \in L^2(0,T ; H^{-1}(\Omega))  \\
 \int_0^T \left\langle u'(t),v(t) \right\rangle_{(H^{-1},H_0^1)}+  \int_{Q_T} (\nabla u\cdot  \nabla v   + \lambda  a\, u\,v)  = \int_{Q_T} f_\l v \\
\hspace{2cm}\text{ for all  }  v \in L^2(0,T;H^1_0(\Omega)), \\ 
 u(0)=g_\l(x)\qquad \hbox{in}\quad L^(\Omega).
\end{array}
\right.
$$

Remember that  by Remark \ref{abscont} above, such weak solution $u$  is continuous in time with values in $L^2(\Omega)$ so that the initial condition  
makes sense. In the rest of the paper, $(P_\l)$ will always refer to the above precise formulation of the problem that was first stated in the Introduction.

Next, thanks to \cite[Theorem 5, Section 7.1]{Ev}, by considering $\l a u$  as a right hand side term (in $L^2(\Omega\times (0,T))$), we have the following.
\begin{lemma}\label{lemaB}
Let $\lambda>0$,  $g_\l \in H^1_0(\Omega)$,   $f_\l\in L^2(0,T;L^2(\O))$, and let $u_\l$ be the weak solution to $(P_\l)$. Then, 
$$u_\l \in L^2(0,T;H^2(\Omega)) \cap L^\infty(0,T;H^1_0(\Omega)), \quad u_\l' \in L^2(0,T;L^2(\Omega)),$$
and $u_\l$ satisfies the following estimate:
\be
\label{bound1}
\begin{split}
   \sup_{0\leq t \leq T} \|u_\l(t)\|_{H^1_0(\Omega)} & +\|u_\l\|_{L^2(0,T;H^2(\Omega))}+\|u_\l'\|_{2}
  \\ &  \leq C\left(\lambda \|au_\l\|_{2}+\|f_\l\|_{2}+\|g_\l\|_{H^1_0(\Omega)}\right), 
\end{split}
\ee
where the constant $C$ depends only on $\O$ and $T$.
\end{lemma}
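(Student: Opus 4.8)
\emph{Proof proposal.} The plan is to freeze the zeroth-order term $\l a\,u_\l$, regard it as a right-hand side, and then quote the standard improved-regularity theorem for the heat equation — exactly as announced just before the statement. First I would record the input regularity of $u_\l$: by definition of the weak solution recalled above, $u_\l\in L^2(0,T;H^1_0(\Omega))$, so in particular $u_\l\in L^2(Q_T)$; since $a$ is bounded and measurable, the function $F_\l:=f_\l-\l a\,u_\l$ then belongs to $L^2(Q_T)=L^2(0,T;L^2(\Omega))$, with $\|F_\l\|_2\le \|f_\l\|_2+\l\,\|a\,u_\l\|_2$. Moreover, by the very weak formulation of $(P_\l)$, $u_\l$ is the unique weak solution of the pure heat equation $\p_t u-\D u=F_\l$ in $Q_T$, with $u=0$ on $\p\Omega\times(0,T)$ and $u(0)=g_\l\in H^1_0(\Omega)$, and now the right-hand side lies in $L^2(0,T;L^2(\Omega))$ rather than merely in $L^2(0,T;H^{-1}(\Omega))$.

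Next, I would apply \cite[Theorem 5, Section 7.1]{Ev} to this heat equation. The principal part $-\D$ has constant (smooth) coefficients, so the hypotheses of that theorem reduce to enough regularity of $\p\Omega$ (we assume $\p\Omega$ of class $C^2$, or $\Omega$ convex) together with $F_\l\in L^2(0,T;L^2(\Omega))$ and $g_\l\in H^1_0(\Omega)$, which were just checked. The theorem then gives $u_\l\in L^2(0,T;H^2(\Omega))\cap L^\infty(0,T;H^1_0(\Omega))$, $u_\l'\in L^2(0,T;L^2(\Omega))$, together with the bound
$$
\sup_{0\le t\le T}\|u_\l(t)\|_{H^1_0(\Omega)}+\|u_\l\|_{L^2(0,T;H^2(\Omega))}+\|u_\l'\|_2\le C\big(\|F_\l\|_2+\|g_\l\|_{H^1_0(\Omega)}\big),
$$
with $C=C(\Omega,T)$. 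Inserting $\|F_\l\|_2\le \|f_\l\|_2+\l\,\|a\,u_\l\|_2$ yields precisely \eqref{bound1}.

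Since the argument is essentially a citation, there is no genuine analytic obstacle; the two points that deserve a line of care are (a) that $F_\l\in L^2(Q_T)$, which relies both on the boundedness of $a$ and on the spatial $H^1_0$-regularity already built into the weak formulation of $(P_\l)$, and (b) the regularity of $\p\Omega$ needed for the \emph{global} $H^2$-estimate — if one prefers to avoid any smoothness assumption on $\Omega$, the very same computation gives interior regularity $u_\l\in L^2_{loc}(0,T;H^2_{loc}(\Omega))$, the remaining conclusions and the estimate (with the $H^2(\Omega)$ term omitted) being unchanged. Alternatively, one could re-derive the estimate directly from the Galerkin approximations used to construct $u_\l$, testing the finite-dimensional equations successively with $u_m'$ and with $-\D u_m$ and passing to the limit; this is self-contained but merely reproves \cite[Theorem 5, Section 7.1]{Ev} in our particular case.
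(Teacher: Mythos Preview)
Your proposal is correct and follows exactly the approach the paper uses: the paper simply cites \cite[Theorem 5, Section 7.1]{Ev} after the sentence ``by considering $\l a u$ as a right hand side term (in $L^2(\Omega\times(0,T))$)'', which is precisely your reduction via $F_\l=f_\l-\l a\,u_\l$. Your added remarks on the boundary regularity needed for the global $H^2$-estimate and the Galerkin alternative are reasonable caveats that the paper leaves implicit.
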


\begin{remark}
Notice that the bound \eqref{bound1} is not very useful when $\lambda \to +\infty$ since what we usually control is $\sqrt{\lambda}\|au\|_{2}$ (shown below in Lemma \ref{lem1}) 
but not 
$\lambda \|au\|_{2}$. Thus the right hand side blows-up a priori.
\end{remark}

\section{Uniqueness of solution for $(P_\infty)$}

In this section we focus on the following problem that will arise as the limit of   $(P_\l)$. Our notion of solution for the problem $\partial_t u -\Delta u   = f$ in $O_a$ will precisely  be the following :

 $$
(P_\infty)
\left\{
\begin{array}{l}
u \in L^2(0,T;H^1_0(\Omega)), \qquad u' \in L^2(0,T ; L^2(\Omega))  \\
u=0 \text{ a.e.  in }  Q_T \setminus O_a\\ 
\int_{Q_T} (u' v +\nabla u\cdot \nabla v)  = \int_{Q_T} f v ,\; \\
\hspace{2cm}   \text{ for all } v  \in L^2(0,T;H^1_0(\Omega)) \text{ s.t. } v=0 \text{ a.e. in }Q_T \setminus O_a\\
u(0)=g(x) \quad  \text{ in }  \O.
\end{array}
\right.
$$

As a byproduct of  Section \ref{solP2}  we will prove the existence of a solution for the problem $(P_\infty)$, as a limit of solutions for $(P_\l)$. In this section, we prove the uniqueness which follows from  a simple energy bound. Notice  that a solution $u$ to $(P_\infty)$ is continuous in time (see Remark \ref{abscont}) thus the initial condition $u(x,0)=g(x)$ makes sense  in $L^2(\Omega)$.

\begin{proposition} \label{unique}Any solution $u$ of $(P_\infty)$ satisfies the following energy bound
\begin{eqnarray}
\frac{1}{4}\sup_{t\in (0,T)}\|u(t)\|_{L^2(\Omega)}^2  +  \| \nabla  u \|_2^2  \leq \frac{1}{2}\|g\|_{L^2(\Omega)}^2+   T \|f\|_2. \label{energyInf}
\end{eqnarray}
Consequently, there exists at most one solution to problem $(P_\infty)$.
\end{proposition}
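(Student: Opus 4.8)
The plan is to derive the energy bound \eqref{energyInf} by using $v=u$ itself as a test function in the weak formulation of $(P_\infty)$, which is legitimate precisely because a solution $u$ of $(P_\infty)$ vanishes a.e.\ in $Q_T\setminus O_a$, hence is an admissible test function for its own equation. First I would fix $s\in(0,T)$ and restrict the time integration to $(0,s)$; since $u\in L^2(0,T;H_0^1(\Omega))$ with $u'\in L^2(0,T;L^2(\Omega))$, Remark \ref{abscont} applies and gives that $t\mapsto\|u(t)\|_{L^2(\Omega)}^2$ is absolutely continuous with $\frac{d}{dt}\|u(t)\|_{L^2(\Omega)}^2=2\langle u'(t),u(t)\rangle_{L^2(\Omega)}$. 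Integrating the identity $\int_{\Omega}(u'u+|\nabla u|^2)=\int_\Omega fu$ over $(0,s)$ therefore yields
$$
\tfrac12\|u(s)\|_{L^2(\Omega)}^2-\tfrac12\|u(0)\|_{L^2(\Omega)}^2+\int_0^s\!\!\int_\Omega|\nabla u|^2\,dxdt=\int_0^s\!\!\int_\Omega fu\,dxdt.
$$

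Next I would estimate the right-hand side. Using the initial condition $u(0)=g$ and bounding $\int_0^s\int_\Omega fu$ crudely by $\|f\|_2\,\|u\|_{L^\infty(0,T;L^2(\Omega))}\cdot\sqrt{T}$ — or more simply by $\sup_{t}\|u(t)\|_{L^2(\Omega)}\int_0^T\|f(t)\|_{L^2(\Omega)}dt$ and then $\int_0^T\|f(t)\|_{L^2}dt\le\sqrt T\|f\|_2$ — followed by Young's inequality $ab\le\tfrac14a^2+b^2$ to absorb a quarter of $\sup_t\|u(t)\|_{L^2(\Omega)}^2$ into the left-hand side, gives
$$
\tfrac14\|u(s)\|_{L^2(\Omega)}^2+\int_0^s\!\!\int_\Omega|\nabla u|^2\,dxdt\le\tfrac12\|g\|_{L^2(\Omega)}^2+T\|f\|_2,
$$
where one must be slightly careful that the term absorbed on the left is controlled by the same $\sup_t\|u(t)\|_{L^2(\Omega)}^2$ that appears after taking the supremum over $s$. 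Taking the supremum over $s\in(0,T)$ on the left (the two terms are handled by choosing $s$ to maximize each, or by noting the gradient term at $s=T$ dominates) produces exactly \eqref{energyInf}.

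For uniqueness, I would take two solutions $u_1,u_2$ of $(P_\infty)$ with the same data $f$ and $g$, and apply the estimate to $w:=u_1-u_2$. Since $(P_\infty)$ is linear and both solutions vanish a.e.\ in $Q_T\setminus O_a$, the difference $w$ is itself a solution of $(P_\infty)$ with $f=0$ and $g=0$ (and $w$ is an admissible test function). The bound \eqref{energyInf} then forces $\sup_t\|w(t)\|_{L^2(\Omega)}^2=0$ and $\|\nabla w\|_2=0$, hence $w\equiv0$. The only genuinely delicate point is the justification that $v=u$ is an admissible test function — this rests entirely on the constraint $u=0$ a.e.\ in $Q_T\setminus O_a$ built into the definition of $(P_\infty)$, so there is no real obstacle; the rest is the standard parabolic energy estimate, and the constants in \eqref{energyInf} are generous enough that no sharp bookkeeping is needed.
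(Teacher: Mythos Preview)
Your proof is correct and follows essentially the same approach as the paper: test the weak formulation with $v=u\,\mathbf{1}_{(0,s)}$, use Remark~\ref{abscont} to rewrite $\int u'u$ as $\tfrac12\|u(s)\|^2-\tfrac12\|g\|^2$, estimate the right-hand side via Young's inequality so that $\tfrac14\sup_t\|u(t)\|^2$ can be absorbed, and then take the supremum in $s$; uniqueness follows by linearity applied to $w=u_1-u_2$. The only cosmetic difference is that the paper applies Young's inequality directly in $L^2(Q_T)$ with parameter $\alpha=2T$ (yielding $T\|f\|_2^2$ on the right, which is in fact what the computation gives), whereas you first pull out $\sup_t\|u(t)\|$ and use Cauchy--Schwarz in time before Young's---both routes are equivalent.
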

\begin{proof} Let $u$ be a solution to $(P_\infty)$, and $s\in (0,T)$. Choosing $v=u\,\mathbf{1}_{(0,s)}$ (where $\mathbf{1}_{(0,s)}$ is the characteristic function of $(0,s)$) in the weak formulation of the problem, we deduce that   
\begin{eqnarray}
\int_{0}^{s} \int_{\Omega} u' u  \;dxdt +  \int_{0}^{s}\int_{\Omega} |\nabla u|^2 \; dx dt = \int_{0}^{s} \int_{\Omega} f u \;dx dt. \label{estimsoir2}
\end{eqnarray}
Now applying Remark \ref{abscont} and using the fact that  $u \in L^2(0,T;H^1_0(\Omega))$ and $u' \in L^2(0,T ; L^2(\Omega))$ we obtain  that  $t\mapsto \|u(t)\|^2_2$ is absolutely continuous, and for a.e. $t$, there holds
$$
\frac{d}{dt}\|u(t)\|^2_{L^2(\Omega)}=2\langle u'(t),u(t)\rangle_{L^2(\Omega)}. 
$$
Returning to \eqref{estimsoir2} we get
\begin{eqnarray}
\frac{1}{2}\|u(s)\|_{L^2(\Omega)}^2 -\frac{1}{2}\|u(0)\|_{L^2(\Omega)}^2 +  \int_{0}^{s}\int_{\Omega} |\nabla u|^2 \; dx dt = \int_{0}^{s} \int_{\Omega} f u \;dx dt. \label{estimsoir22}
\end{eqnarray}
By Young's inequality we have 
\begin{eqnarray}
 \left|\int_{0}^{s} \int_{\Omega} f u \;dx dt\right| & \leq& \frac{\alpha}{2}\|f\|_{L^2(\Omega \times (0,s))}^2 +\frac{1}{2\alpha}\|u\|_{L^2(\Omega\times (0,s))}^2 \notag\\
& \leq&   \frac{\alpha}{2}\|f\|_{L^2(\Omega \times (0,s))}^2 + \frac{T}{2\alpha}\sup_{t\in (0,T)}\|u(t)\|_{L^2(\Omega)}^2. \label{estimsoir3}
\end{eqnarray}
Setting $\alpha=2T$, estimating \eqref{estimsoir22} by \eqref{estimsoir3} and  passing to the supremum in $s\in (0,T)$ finally gives
\begin{eqnarray}
\frac{1}{4}\sup_{t\in (0,T)}\|u(t)\|_{L^2(\Omega)}^2  +  \| \nabla  u \|_2^2  \leq \frac{1}{2}\|g\|_{L^2(\Omega)}^2+   T \|f\|_2^2,\notag
\end{eqnarray}
as desired.

Now assume that $u_1$ and $u_2$ are two solutions of $(P_\infty)$, and set $w:=u_1-u_2$. Then $w$ is a solution of $(P_\infty)$ with $f=0$ and $g=0$. 
Therefore,  applying \eqref{energyInf} to $w$ automatically gives $w=0$, which proves the uniqueness of the solution of $(P_\infty)$.
\end{proof}



\section{Convergence of $u_\l$}
\label{solP2}
We now analyse  the convergence of $u_\l$, which will follow from energy bounds for $u_\l$ and $u_\l'$. As already mentioned before, the standard energy bound for the solutions of $(P_\l)$ that is stated in Lemma \ref{lemaB}, blows up a priori when $\lambda$ goes to $+\infty$.  Our goal in the sequel is   to get better estimates, uniform in $\lambda$. The price to pay is the condition  $\partial_t a \leq 0$ which implies that $O_a$ is nondecreasing in time (for the set inclusion).

\subsection{First energy bound} 

\begin{lemma}
\label{lem1}
Assume that $g_\l \in L^2(\Omega)$ and $f_\l\in L^2(0,T;L^2(\O))$, and let $u_\l$ be the weak solution of problem $(P_\l)$.  Then,
\begin{eqnarray}
 \frac{1}{4}\sup_{t \in (0,T)}\|u_\l(t)\|^2_{L^2(\Omega)}+ \| \nabla u_\l\|_{2}^2 + \lambda \int_0^T\int_{\Omega} a u_\l^2\, dx dt  \leq  \|g_\l\|_{L^2(\O)}^2+T\|f_\l\|_2^2.  
 \label{bound2}
 \end{eqnarray}
\end{lemma}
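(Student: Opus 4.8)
\emph{Plan of proof.} I would run exactly the energy argument already used in the proof of Proposition~\ref{unique}, the only novelty being that the problem $(P_\l)$ produces the additional term $\l\int_{Q_T}au_\l^2$ which, having a favourable sign, simply stays on the good side of the estimate. Fix $s\in(0,T)$ and use $v=u_\l\,\mathbf{1}_{(0,s)}$ as a test function in the weak formulation of $(P_\l)$; this is admissible since $u_\l\in L^2(0,T;H^1_0(\O))$. Because $u_\l\in L^2(0,T;H^1_0(\O))$ with $u_\l'\in L^2(0,T;H^{-1}(\O))$ (in fact $u_\l'\in L^2(0,T;L^2(\O))$ by Lemma~\ref{lemaB}), Remark~\ref{abscont} applies: $t\mapsto\|u_\l(t)\|_{L^2(\O)}^2$ is absolutely continuous with derivative $2\langle u_\l'(t),u_\l(t)\rangle$. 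Integrating this on $(0,s)$ and recalling $u_\l(0)=g_\l$, I obtain the energy identity
\[
\tfrac12\|u_\l(s)\|_{L^2(\O)}^2+\int_0^s\!\!\int_\O|\n u_\l|^2+\l\int_0^s\!\!\int_\O au_\l^2=\tfrac12\|g_\l\|_{L^2(\O)}^2+\int_0^s\!\!\int_\O f_\l u_\l .
\]

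Next I would estimate the source term by Young's inequality with parameter $2T$, exactly as in \eqref{estimsoir3}:
\[
\Big|\int_0^s\!\!\int_\O f_\l u_\l\Big|\le T\|f_\l\|_{L^2(\O\times(0,s))}^2+\tfrac1{4T}\int_0^s\|u_\l(t)\|_{L^2(\O)}^2\,dt\le T\|f_\l\|_2^2+\tfrac14\sup_{t\in(0,T)}\|u_\l(t)\|_{L^2(\O)}^2 ,
\]
using $\int_0^s\|u_\l(t)\|_{L^2(\O)}^2\,dt\le T\sup_t\|u_\l(t)\|_{L^2(\O)}^2$. Inserting this into the energy identity and discarding the two nonnegative terms $\int_0^s\int|\n u_\l|^2$ and $\l\int_0^s\int au_\l^2$ gives, for every $s$,
\[
\tfrac12\|u_\l(s)\|_{L^2(\O)}^2\le\tfrac12\|g_\l\|_{L^2(\O)}^2+T\|f_\l\|_2^2+\tfrac14\sup_{t\in(0,T)}\|u_\l(t)\|_{L^2(\O)}^2 ;
\]
taking the supremum over $s\in(0,T)$ then lets me absorb the last term and produces the bound $\tfrac14\sup_t\|u_\l(t)\|_{L^2(\O)}^2\le\tfrac12\|g_\l\|_{L^2(\O)}^2+T\|f_\l\|_2^2$ for $\sup_t\|u_\l(t)\|^2$ alone. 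Reinserting this bound into the energy identity evaluated at $s=T$ and rearranging then yields \eqref{bound2}.

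The argument is essentially routine, and — unlike the remainder of Section~\ref{solP2} — it uses \emph{no} monotonicity assumption on $a$: the term $\l\int au_\l^2$ generated by the equation already carries the right sign, so $\p_t a\le0$ is not needed here. The only point that deserves a little care is the self-improving (absorption) step for $\sup_t\|u_\l(t)\|_{L^2(\O)}^2$: one must first isolate this quantity by throwing away the gradient and potential contributions on the left, bound it, and only then feed it back to close the full estimate. The choice of Young parameter $2T$ is precisely what makes the coefficient of $\sup_t\|u_\l(t)\|^2$ on the right hand side equal to $\tfrac14$, small enough for the absorption to work; everything else is elementary rearrangement.
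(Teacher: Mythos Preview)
Your proof is correct and follows exactly the route the paper indicates: the paper's own proof simply writes down the energy identity obtained by testing with $u_\l\,\mathbf{1}_{[0,s]}$ and then says ``arguing as in the proof of Proposition~\ref{unique}, we obtain \eqref{bound2}, so that we omit the details.'' You have supplied precisely those omitted details, including the absorption step for $\sup_t\|u_\l(t)\|_{L^2(\O)}^2$ and the observation that the potential term carries the right sign and requires no hypothesis on $\partial_t a$. (As a minor remark, the two-step procedure you describe---first bound $\sup_t\|u_\l(t)\|^2$ alone, then reinsert at $s=T$---yields the estimate with slightly larger numerical constants than those printed in \eqref{bound2}; the paper is equally casual about this point in Proposition~\ref{unique}, and the precise constants play no role downstream.)
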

\begin{proof}
Let $u_\l$ be a solution of $(P_\l)$ and $s\in (0,T)$. Testing with $v=u\,\mathbf{1}_{[0,s]}$ in the weak formulation of $(P_\l)$
\begin{align*}
\frac{1}{2}\|u_\l(s)\|_{L^2(\Omega)}^2 -\frac{1}{2}\|u_\l(0)\|_{L^2(\Omega)}^2 +  \int_{0}^{s}\int_{\Omega} |\nabla u_\l|^2 \; dx dt + \lambda\int_0^{s}\int_{\O}au_\l^2\, dxdt \nonumber \\
= \int_{0}^{s} \int_{\Omega} f_\l u_\l \;dx dt. 
\end{align*}
and arguing as in the proof of Proposition \ref{unique}, we obtain \eqref{bound2}, so that we omit the details.
 \end{proof}
 

\subsection{Second energy bound} We now derive a uniform bound on $\|u_\l'\|_2$. To this end, we will assume a time-monotonicity condition on $a$.


\begin{definition}[Assumption (A)] Assume that $g_\l \in L^2(\Omega)$ and $f_\l\in L^2(0,T;L^2(\O))$. We say that  Assumption (A) hold if  $a:Q_T\to \R^+$ is Lipschitz and   
\begin{eqnarray}
\partial_t a(x,t)\leq 0 \text{ for a.e. }(x,t)\in Q_T. \label{Assump1}
\end{eqnarray}
 \end{definition}

\begin{lemma}
\label{lem2} We suppose that Assumption (A) holds.  Then, the solution  $u_\l$  of $(P_\l)$ satisfies the estimate:
\begin{align}
\label{derbound}
&\int_0^T\int_\O (u_\l')^2\, dxdt + \sup_{s\in (0,T)}\left(\int_\O |\nabla u_\l(s)|^2\, dx\right) \nonumber \\
& \leq \int_0^T\int_\O f_\l^2\, dxdt+\int_{\Omega}|\nabla u_\l(0)|^2\, dx+\lambda \int_{\Omega} a(0) g_\l^2\, dx.
\end{align}
\end{lemma}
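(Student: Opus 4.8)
The natural strategy is to test the weak formulation of $(P_\lambda)$ with $v = u_\lambda' \mathbf{1}_{[0,s]}$ and then integrate by parts in the potential term, exploiting $\partial_t a \le 0$ to obtain a sign-favourable contribution. Concretely, since Lemma~\ref{lemaB} guarantees $u_\lambda' \in L^2(0,T;L^2(\Omega))$ and $u_\lambda \in L^2(0,T;H^2(\Omega)) \cap L^\infty(0,T;H^1_0(\Omega))$, the test function $u_\lambda'$ is admissible. This yields, for a.e.\ $s \in (0,T)$,
\[
\int_0^s\!\!\int_\Omega (u_\lambda')^2\,dxdt + \int_0^s\!\!\int_\Omega \nabla u_\lambda \cdot \nabla u_\lambda'\,dxdt + \lambda\int_0^s\!\!\int_\Omega a\,u_\lambda\,u_\lambda'\,dxdt = \int_0^s\!\!\int_\Omega f_\lambda u_\lambda'\,dxdt.
\]
The first term on the right is controlled by Young's inequality as $\tfrac12\int_0^s\!\int_\Omega f_\lambda^2 + \tfrac12 \int_0^s\!\int_\Omega (u_\lambda')^2$, the second summand being absorbed into the left-hand side.

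The two remaining terms on the left should be rewritten as exact time derivatives. For the gradient term, $\nabla u_\lambda \cdot \nabla u_\lambda' = \tfrac12 \partial_t |\nabla u_\lambda|^2$, so it integrates to $\tfrac12 \int_\Omega |\nabla u_\lambda(s)|^2 - \tfrac12\int_\Omega |\nabla u_\lambda(0)|^2$. For the potential term, write $a\,u_\lambda\,u_\lambda' = \tfrac12 a\,\partial_t(u_\lambda^2) = \tfrac12\partial_t(a\,u_\lambda^2) - \tfrac12 (\partial_t a)\,u_\lambda^2$. Integrating in time,
\[
\lambda\int_0^s\!\!\int_\Omega a\,u_\lambda\,u_\lambda' = \frac{\lambda}{2}\int_\Omega a(s)\,u_\lambda(s)^2 - \frac{\lambda}{2}\int_\Omega a(0)\,g_\lambda^2 - \frac{\lambda}{2}\int_0^s\!\!\int_\Omega (\partial_t a)\,u_\lambda^2.
\]
Here $\int_\Omega a(s) u_\lambda(s)^2 \ge 0$ since $a \ge 0$, and the last term is $\ge 0$ because $\partial_t a \le 0$; hence the whole expression is bounded below by $-\tfrac{\lambda}{2}\int_\Omega a(0)\,g_\lambda^2$. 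Assembling these pieces and multiplying through by $2$ gives, after passing to the supremum over $s \in (0,T)$, the stated estimate~\eqref{derbound} (possibly with harmless constant adjustments, e.g.\ $\int_0^T f_\lambda^2$ in place of $\tfrac12\int_0^T f_\lambda^2$, which are already built into the form of the claim).

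The main technical obstacle is \emph{rigour of the integration by parts in time} when multiplying by $u_\lambda'$: the identities $\nabla u_\lambda\cdot\nabla u_\lambda' = \tfrac12\partial_t|\nabla u_\lambda|^2$ and $\partial_t(a u_\lambda^2) = (\partial_t a) u_\lambda^2 + 2 a u_\lambda u_\lambda'$ must be justified at the level of the available regularity. The cleanest route is to work first with the Galerkin approximations $u_\lambda^{(k)}$, where all manipulations are legitimate on smooth finite-dimensional spaces, establish the estimate uniformly in $k$, and then pass to the limit $k\to\infty$ using lower semicontinuity of the $L^2$-norms of $u_\lambda'$ and $\nabla u_\lambda$ under weak convergence (and strong convergence of the data terms); alternatively one invokes a mollification-in-time argument together with Remark~\ref{abscont} applied to $t \mapsto \|\nabla u_\lambda(t)\|_{L^2}^2$ and to $t\mapsto \int_\Omega a(x,t) u_\lambda(x,t)^2\,dx$, the latter being absolutely continuous because $a$ is Lipschitz and $u_\lambda, u_\lambda' \in L^2(0,T;L^2(\Omega))$. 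A secondary (minor) point is checking that the $\partial_t a$ term is genuinely integrable, which follows from $a$ Lipschitz (so $\partial_t a \in L^\infty$) and $u_\lambda \in L^2(Q_T)$ together with the a~priori $L^2$-bound on $u_\lambda$ from Lemma~\ref{lem1}.
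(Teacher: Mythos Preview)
Your proposal is correct and follows essentially the same approach as the paper's proof: test with $v = u_\lambda'\,\mathbf{1}_{(0,s)}$, rewrite the gradient and potential terms as exact time derivatives (using the product rule on $a\,u_\lambda^2$), apply Young's inequality to the right-hand side, and exploit $\partial_t a \le 0$ together with $a\ge 0$ to discard the sign-favourable terms before taking the supremum in $s$. Your additional remarks on justifying the time integrations by parts via Galerkin approximation or mollification go slightly beyond what the paper spells out (the paper simply invokes the regularity from Lemma~\ref{lemaB} and Remark~\ref{abscont}), but the core argument is identical.
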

\begin{proof}
 Thanks to Lemma \ref{lemaB}, we know that  $u_\l' \in L^2(0,T ; H^1_0(\Omega))$. Consequently, for every $s\in (0,T)$, the function $v:=u_\l'\, \mathbf{1}_{(0,s)}$ is an admissible test function in the weak formulation of $(P_\l)$. Hence, we obtain the identity
$$\int_0^s\int_{\Omega}(u_\l')^2\, dxdt+\int_0^s\int_{\Omega}\nabla u_\l \cdot \nabla u_\l'\, dxdt+\lambda \int_0^s\int_{\Omega} a u_\l u_\l' \, dx dt= \int_0^s\int_{\Omega}f_\l u_\l'\, dx dt,$$
or written differently (applying Remark \ref{abscont}),
\begin{align*}
\int_0^s\int_{\Omega}(u_\l')^2\, dx dt+\int_0^s\left(\frac{1}{2}\int_{\Omega}|\nabla u_\l|^2\, dx\right)'\, dt+\lambda\int_0^s  \left[\left(\frac{1}{2}\int_{\Omega} a u_\l^2\, dx\right)'-  \frac{1}{2}\int_{\Omega}a'u_\l^2\, dx\right]dt\\
 = \int_0^s\int_{\Omega}f_\l u_\l'\, dx dt.
\end{align*}
This yields
\begin{eqnarray*}
\int_0^s\int_{\Omega}(u_\l')^2\, dx dt+\frac{1}{2}\int_{\Omega}|\nabla u_\l(s)|^2\, dx+ \frac{\lambda }{2}\int_{\Omega} a(s) u_\l(s)^2\, dx- \frac{\lambda}{2}\int_0^s\int_{\Omega}a' u_\l^2\, dxdt
\\
= \int_0^s \int_{\Omega}f_\l u_\l'\, dx dt+\frac{1}{2}\int_{\Omega}|\nabla u_\l(0)|^2\, dx +  \frac{\lambda }{2}\int_{\Omega} a(0) u_\l(0)^2\, dx.
\end{eqnarray*}
By Young's inequality, 
$$\int_0^s\int_{\Omega}f_\l u_\l'\, dxdt \leq \frac{1}{2}\int_0^s\int_\O f_\l^2\, dxdt+\frac{1}{2}\int_0^s\int_\O (u_\l')^2\, dxdt,$$
so that we obtain
\begin{eqnarray*}
\int_0^s\int_{\Omega}(u_\l')^2\, dx dt+ \int_{\Omega}|\nabla u_\l(s)|^2\, dx+ \lambda\int_{\Omega} a(s) u_\l(s)^2\, dx- \lambda\int_0^s\int_{\Omega}a'u_\l^2\, dxdt
\\
\leq \int_0^s\int_\O|f_\l|^2\, dxdt+ \int_{\Omega}|\nabla u_\l(0)|^2\, dx +   \lambda\int_{\Omega} a(0) u_\l(0)^2\, dx.
\end{eqnarray*}
Finally, using Assumption (A), the initial condition on $u_\l(0)$ and passing to the supremum in $s$, we conclude that estimate \eqref{derbound} holds.
\end{proof}

\subsection{Weak convergence of  solutions} 
\noindent Using the previous energy estimates, we first establish the weak convergence of $u_\l$ to the solution $u$ of Problem $(P_\infty)$, under  \emph{Assumption (A)}, and supposing certain bounds on the right hand side $f_\l$ and on the initial data $g_\l$.

\begin{proposition}\label{weakConv} 
Assume that $a$ satisfies \emph{Assumption (A)}.
Let $\{f_\lambda\}$ be a bounded sequence in  $L^2(Q_T)$ and $\{g_\lambda\}$ be a bounded sequence in $H^1_0(\O)$, satisfying
\be\label{Bound:glambda}
\sup_{\lambda}\left( \lambda\int_\O a(0) g_\l^2\, dx \right) < \infty. 
\ee
Up to extracting subsequences, we can assume that $f_\l$ converges weakly to a function $f$ in $L^2(Q_T)$, and $g_\l$ converges weakly to a function $g\in H^1_0(\O)$. 

Let $u_\l$ be the solution of $(P_\l)$. Then $u_\l$ converges weakly in $L^2(Q_T)$ to the unique solution $u$ of problem $(P_\infty)$.
\end{proposition}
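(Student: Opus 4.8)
The plan is to combine the two energy estimates from Lemmas \ref{lem1} and \ref{lem2} to extract a subsequence of $u_\l$ converging to a limit candidate, and then to identify that limit as the solution of $(P_\infty)$ by passing to the limit in the weak formulation of $(P_\l)$ against an appropriate class of test functions. The key point is that the estimates are uniform in $\l$: Lemma \ref{lem1} gives a bound on $u_\l$ in $L^\infty(0,T;L^2(\Omega))\cap L^2(0,T;H^1_0(\Omega))$ together with $\l\int_{Q_T} a u_\l^2\le C$, and Lemma \ref{lem2}, whose hypotheses are exactly met thanks to the boundedness of $\{g_\l\}$ in $H^1_0(\Omega)$ and assumption \eqref{Bound:glambda}, gives a uniform bound on $\|u_\l'\|_2$ in $L^2(0,T;L^2(\Omega))$. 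Hence, up to a subsequence, $u_\l\rightharpoonup u$ weakly in $L^2(0,T;H^1_0(\Omega))$ and $u_\l'\rightharpoonup u'$ weakly in $L^2(0,T;L^2(\Omega))$, so that $u$ inherits the regularity required in $(P_\infty)$, namely $u\in L^2(0,T;H^1_0(\Omega))$ with $u'\in L^2(0,T;L^2(\Omega))$.

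Next I would check the two non-variational conditions in $(P_\infty)$. First, $u=0$ a.e. in $Q_T\setminus O_a$: from $\l\int_{Q_T} a u_\l^2\le C$ we get $\int_{Q_T} a u_\l^2\to 0$, and since $u_\l\to u$ strongly in $L^2(Q_T)$ (this strong convergence follows from the Aubin--Lions lemma, using the bound on $u_\l$ in $L^2(0,T;H^1_0(\Omega))$ and on $u_\l'$ in $L^2(0,T;L^2(\Omega))$), we obtain $\int_{Q_T} a u^2=0$, hence $u=0$ a.e. on $\{a>0\}$; since $\partial O_a$ has zero measure, $Q_T\setminus O_a$ differs from $\{a>0\}$ by a null set, giving $u=0$ a.e. in $Q_T\setminus O_a$. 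Second, the initial condition: by the Aubin--Lions argument $u_\l\to u$ in $C([0,T];L^2(\Omega))$ as well (or at least $u_\l(0)\rightharpoonup u(0)$ weakly in $L^2(\Omega)$), and since $u_\l(0)=g_\l\rightharpoonup g$, we conclude $u(0)=g$.

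It then remains to verify the variational identity against any admissible $v\in L^2(0,T;H^1_0(\Omega))$ with $v=0$ a.e. in $Q_T\setminus O_a$. Such a $v$ is an admissible test function in $(P_\l)$, yielding
\[
\int_0^T\langle u_\l'(t),v(t)\rangle\,dt+\int_{Q_T}(\nabla u_\l\cdot\nabla v+\l a\,u_\l v)=\int_{Q_T} f_\l v.
\]
The first term passes to the limit by weak convergence of $u_\l'$ (pairing against the fixed $v$), the gradient term by weak convergence of $\nabla u_\l$ in $L^2$, and the right-hand side since $f_\l\rightharpoonup f$ in $L^2(Q_T)$. The only delicate term is $\l\int_{Q_T} a\,u_\l v$: here I would use that $v=0$ a.e. where $a\ne 0$, so the integrand vanishes identically, $\l\int_{Q_T}a\,u_\l v=0$ for every $\l$. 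Passing to the limit thus gives exactly the weak formulation in $(P_\infty)$, and by Proposition \ref{unique} the limit $u$ is unique; since every subsequence has a further subsequence converging to this same $u$, the whole family $u_\l$ converges weakly in $L^2(Q_T)$ to $u$.

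The main obstacle is the handling of the penalization term $\l\int_{Q_T}a\,u_\l v$, and more subtly making sure the test function class in the limit problem is correctly matched: one must exploit that admissible $v$ in $(P_\infty)$ vanish precisely on the complement of $O_a$ (equivalently, on $\{a>0\}$ up to a null set) so that the divergent factor $\l$ never acts on anything nonzero. A secondary technical point is justifying the strong $L^2(Q_T)$ convergence of $u_\l$ (needed for the identification $u=0$ outside $O_a$ and for the initial condition) via Aubin--Lions, which requires the uniform bound on $u_\l'$ from Lemma \ref{lem2}; this is exactly why Assumption (A) and the bound \eqref{Bound:glambda} are imposed.
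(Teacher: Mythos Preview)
Your proposal is correct and follows essentially the same strategy as the paper: use Lemmas~\ref{lem1} and~\ref{lem2} for uniform bounds, extract weak limits of $u_\l$ and $u_\l'$, kill the penalization term by testing only against $v$ vanishing outside $O_a$ (so that $a\,u_\l v\equiv 0$), and conclude via Proposition~\ref{unique}. The only differences are technical: the paper shows $u=0$ on $Q_T\setminus O_a$ directly from $\int_{\{a>\varepsilon\}}u_\l^2\le C/(\lambda\varepsilon)\to 0$ without invoking Aubin--Lions, and it recovers the initial condition by the standard integration-by-parts trick with a test function $v\in C^1([0,T];H^1_0(\Omega))$ satisfying $v(T)=0$, rather than by compactness in $C([0,T];L^2(\Omega))$.
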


\begin{proof} We know by Lemma \ref{lem1} that $u_\l$ is uniformly bounded in $L^2(0,T;H^1_0(\Omega))$, thus converges weakly (up to extracting  a subsequence) in $L^2(0,T;H^1_0(\Omega))$ to some function $u \in L^2(0,T;H^1_0(\Omega))$.  Under \emph{Assumption (A)}, we also know, thanks to Lemma \ref{lem2}, that 
$$\|u'_\l\|_{L^2(Q_T)}\leq C,$$
so that, $u'_\l$ converges also weakly in $L^2(Q_T)$ (up to extracting a further subsequence) to some limit $w \in L^2(Q_T)$, which must be equal to $u'$ by uniqueness of the limit in $\mathcal{D}'(Q_T)$. This shows that $u'\in L^2(Q_T)$.

Next, due to \eqref{bound2} we know that 
$$\sup_{\lambda} \left(\lambda \int_{0}^T\int_{\O} a u_\l^2\, dxdt\right) \leq C,$$
which implies that, at the limit, $u$ must be equal to zero a.e. on any set of the form $\{a>\varepsilon\}$, with $\varepsilon>0$. By considering the union for $n\in \mathbb{N}^*$ of those sets with $\varepsilon=1/n$, we obtain that $u=0$ a.e. on $Q_T\setminus O_a$. 

Now let us check  that $u$ satisfies the equation in the weak sense. Let $v$ be any test function in $L^2(0,T ; H^1_0(\O))$ such that $v=0$ a.e. in $Q_T \setminus O_a$. Then $a u_\l v =0$ a.e. in $Q_T$, and using the fact that $u_\l$ is a solution of $(P_\l)$, we can write 
$$\langle u'_\l , v \rangle_{L^2(Q_T)} + \langle \nabla u_\l ,  \nabla v \rangle_{L^2(Q_T)}    = \langle f_\l,v\rangle_{L^2(Q_T)}.$$ 
Thus passing to the (weak) limit in $u_\l$, $u'_\l$ and $f_\l$ we get 
$$\langle u' , v \rangle_{L^2(Q_T)} + \langle \nabla u ,  \nabla v \rangle_{L^2(Q_T)}    = \langle f,v\rangle_{L^2(Q_T)}.$$

To conclude that $u$ is a solution of $(P_\infty)$ it remains to prove that $u(x,0)=g(x)$ for a.e. $x\in \Omega$. For this purpose, we let $v \in C^1([0,T], H^1_0(\Omega))$ be any function satisfying $v(T)=0$. Testing the equation with this $v$, using that $u_\l(0)=g_\l$ and integrating   by parts with respect to $t$ we obtain

$$-\langle g_\l, v(0)\rangle_{L^2(\O)} -\int_{0}^T\langle u_\l, v'\rangle_{L^2(\O)} + \int_0^T \langle \nabla u_\l , \nabla v \rangle_{L^2(\O)} =  \int_0^T \langle  f_\l , v\rangle.$$
Passing to the limit in $\l$ and using the weak convergence of $g_\l$ to $g$, we get 
$$-\langle g, v(0)\rangle_{L^2(\O)} -\int_{0}^T\langle u, v'\rangle_{L^2(\O)} + \int_0^T \langle \nabla u , \nabla v \rangle_{L^2(\O)} =  \int_0^T \langle f , v\rangle_{L^2(\O)}.$$
Integrating back again by parts on $u$ yields
$$\langle g, v(0)\rangle_{L^2(\O)}  = \langle u(0), v(0)\rangle_{L^2(\O)},$$
and since $v(0)$ is arbitrary, we deduce that $u(0)=g$ in $L^2(\O)$.

Finally, the convergence of $u_\l$ to $u$ holds a  priori up to a subsequence, but by uniqueness of the solution for the problem $(P_\infty)$ (see Proposition \ref{unique}), the convergence holds for the whole sequence.
\end{proof}

\begin{corollary} \label{existence}Let  $O_a\subset Q_T$ be open and increasing in time (in the sense of Remark \ref{Increasing-in-time}), and let $f\in L^2(\O\times (0,T)) $ and $g \in H^1_0(\O)$. Then there exists a (unique) solution for $(P_\infty)$. 
\end{corollary}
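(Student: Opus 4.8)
The strategy is to obtain the solution of $(P_\infty)$ as the weak limit furnished by Proposition~\ref{weakConv}: that proposition produces a (unique) solution of $(P_\infty)$ as soon as one has a potential $a$ satisfying Assumption~(A) whose associated vanishing region is the prescribed $O_a$, together with data $f_\lambda\rightharpoonup f$ and $g_\lambda\rightharpoonup g$ fulfilling its hypotheses. Since here only the open set $O_a$ is given, the whole content of the proof is to manufacture such a potential; once this is done, feeding constant sequences $f_\lambda:=f$, $g_\lambda:=g$ into Proposition~\ref{weakConv} finishes the existence part, and uniqueness is already contained in Proposition~\ref{unique}.

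The natural candidate is
$$a(x,t):=\mathrm{dist}\big((x,t),\overline{O_a}\big),\qquad (x,t)\in\overline{\Omega}\times[0,T],$$
the Euclidean distance in $\mathbb{R}^{N+1}$ to the (compact) closure of $O_a$. This $a$ is $1$-Lipschitz, bounded and nonnegative, and $\{a=0\}\cap Q_T=\overline{O_a}\cap Q_T$; since $O_a$ is open one has $O_a\subseteq\mathrm{Int}(\{a=0\})$ with $\mathrm{Int}(\{a=0\})\setminus O_a\subseteq\partial O_a$, Lebesgue-null by the standing assumption of the paper, so the vanishing region attached to $a$ coincides with $O_a$ up to a null set, and the almost-everywhere constraints defining $(P_\infty)$ (both on $u$ and on the admissible test functions $v$) are literally unchanged. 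The delicate point, and the main obstacle, is to check $\partial_t a\leq 0$ a.e., which is where the hypothesis that $O_a$ is increasing in time is used: "$\Omega_a(t)$ increasing" means precisely that $O_a$, hence $\overline{O_a}$, is upward closed in the time variable, i.e. $(y,s)\in\overline{O_a}$ and $s\leq t'$ with $t'\in(0,T)$ imply $(y,t')\in\overline{O_a}$. Fixing $x$ and $0<t_1<t_2<T$ and taking $(y,s)\in\overline{O_a}$ with $|(x,t_1)-(y,s)|=a(x,t_1)$, one produces a competitor for $a(x,t_2)$ in $\overline{O_a}$ at distance $\leq|(x,t_1)-(y,s)|$: the point $(y,t_2)$ if $s\leq t_2$ (using upward closedness, and $|(x,t_2)-(y,t_2)|=|x-y|$), or $(y,s)$ itself if $s>t_2$ (since $s-t_2<s-t_1$). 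Hence $t\mapsto a(x,t)$ is nonincreasing for each fixed $x$, and, $a$ being Lipschitz hence a.e. differentiable, this yields $\partial_t a\leq 0$ a.e. in $Q_T$; so $a$ satisfies Assumption~(A).

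It remains to apply Proposition~\ref{weakConv} with $f_\lambda:=f$ and $g_\lambda:=g$. These constant sequences are bounded in $L^2(Q_T)$ and in $H^1_0(\Omega)$ and converge weakly to $f$ and $g$; the only nontrivial requirement is $\sup_\lambda\big(\lambda\int_\Omega a(x,0)g^2\,dx\big)<\infty$, which forces $\int_\Omega a(x,0)g^2\,dx=0$, i.e. $g=0$ a.e. on $\Omega\setminus(\overline{O_a}\cap\{t=0\})$. This is exactly the compatibility condition that any solvable instance of $(P_\infty)$ must satisfy: since a solution of $(P_\infty)$ belongs to $C([0,T];L^2(\Omega))$ with $u(0)=g$ and $u(t)=0$ a.e. outside $\Omega_a(t)$ for a.e.\ $t$, the monotonicity of $t\mapsto\Omega_a(t)$ forces $g$ to vanish a.e. outside $\bigcap_{t>0}\overline{\Omega_a(t)}$. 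Under this condition, Proposition~\ref{weakConv} gives $u_\lambda\rightharpoonup u$ with $u$ the unique solution of $(P_\infty)$ attached to $a$, hence—by the identification of vanishing regions above—a solution of $(P_\infty)$ for the given $O_a,f,g$; combined with Proposition~\ref{unique} this proves existence and uniqueness.
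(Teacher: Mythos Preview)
Your proof takes exactly the same approach as the paper's: define $a(x,t):=\mathrm{dist}((x,t),\overline{O_a})$, set $f_\lambda\equiv f$, $g_\lambda\equiv g$, and invoke Proposition~\ref{weakConv}. The paper's proof is a one-line reference to this construction; you have simply filled in the verifications it leaves implicit, in particular the monotonicity $\partial_t a\le 0$ (your competitor argument is correct) and the compatibility condition on $g$ needed for the bound $\sup_\lambda\lambda\int_\Omega a(x,0)g^2\,dx<\infty$, which the paper does not comment on.
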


\begin{proof} It suffices to apply Proposition \ref{weakConv} with, for instance $a(x,t):= {\rm dist}((x,t),\overline{O_a})$, $f_\l=f$ and $g_\l=g$. 
\end{proof}


\begin{remark}{\rm (Convergence in $\mathcal{D}'(\Omega\times (0,T))$)}. Under \emph{Assumption (A)}, letting $u$ being the weak limit of $u_\lambda$ in $L^2(Q_T)$, we already know  that 
$$u=0\quad \hbox{a.e. in}\quad Q_T \setminus O_a.$$
 Then 
$$f_\lambda+\Delta u_\lambda -u'_\lambda  \longrightarrow f+\Delta u -u' \text{ in }\mathcal{D}'(\Omega\times (0,T)),$$
which implies that 
\begin{eqnarray}
\lambda a u_\lambda  \longrightarrow h \text{ in }\mathcal{D}'(\Omega\times (0,T)), \label{Conv0}
\end{eqnarray}
for some distribution $h=f+\Delta u-u'  \in \mathcal{D}'(\Omega \times (0,T))$,   supported in $O_a^c$. Actually, since $u=0$ in $O^c_a$, we have 
$$\Delta u=0 \quad \text{ and }\quad  u'=0 \quad \text{ in } \mathcal{D'}(\overline{O_a}^c).$$ 
This means that 
$$h=0 \quad \text{ in }\mathcal{D'}(O_a) \quad \text{ and }\quad  h=f  \text{ in }\mathcal{D'}(\overline{O_a}^c). $$
Notice that, \emph{a priori}, $h$ could have a singular part supported on  $\partial O_a$. We finally deduce that 
 \begin{eqnarray}
\lambda a u_\lambda \underset{\lambda \to +\infty}{\longrightarrow}  f{\bf 1}_{O_a^c} +\Delta u|_{\partial O_a} \text{ in } \mathcal{D}'(\Omega \times (0,T)). \label{distribConv}
 \end{eqnarray}

\end{remark}


\subsection{Strong convergence in $L^2(0,T;H^1(\O))$}

We now go further  using the same argument as for the stationary problem, and prove a stronger convergence which is one of our main results.

\begin{theorem} \label{mainM}Under the same hypotheses as in Proposition \ref{weakConv}, denote by $u_\lambda$ the solution of $(P_\l)$. Then, $u_\l$ converges strongly  in $L^2(0,T;H^1_0(\O))$ to the solution $u$ of problem $(P_\infty)$.
\end{theorem}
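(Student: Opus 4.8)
The plan is to mimic the strategy used in Proposition \ref{thecor} for the stationary case, replacing the Dirichlet energy by the natural parabolic energy functional. Concretely, for $v \in L^2(0,T;H^1_0(\Omega))$ with $v' \in L^2(Q_T)$, introduce
$$
\mathcal{E}_\lambda(v) := \tfrac12\|v(T)\|_{L^2(\Omega)}^2 + \int_{Q_T}|\nabla v|^2 + \lambda\int_{Q_T} a\,v^2 - \tfrac12\|g_\lambda\|_{L^2(\Omega)}^2,
$$
together with the analogous limit functional $\mathcal{E}(v)$ (no potential term, $v$ restricted to vanish a.e. on $Q_T\setminus O_a$, and $\tfrac12\|g\|^2$ subtracted). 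Using Remark \ref{abscont} to write $\int_{Q_T}v'v = \tfrac12\|v(T)\|^2 - \tfrac12\|v(0)\|^2$, one checks that $u_\lambda$ is characterized as the unique function (among those with the prescribed initial data $g_\lambda$) satisfying
$$
\mathcal{E}_\lambda(u_\lambda) - 2\!\int_{Q_T}\! f_\lambda u_\lambda \;\le\; \mathcal{E}_\lambda(v) - 2\!\int_{Q_T}\! f_\lambda v
$$
for all admissible competitors $v$ with $v(0)=g_\lambda$ — this is just the weak formulation tested against $v-u_\lambda$, using convexity, exactly as in the stationary argument.

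Then I would run the $\Gamma$-convergence/compactness scheme. From Proposition \ref{weakConv} we already know $u_\lambda \rightharpoonup u$ weakly in $L^2(0,T;H^1_0(\Omega))$, with $u$ the solution of $(P_\infty)$, and that $u$ vanishes a.e. on $Q_T\setminus O_a$. The \textbf{liminf inequality} is the lower semicontinuity under weak $L^2(0,T;H^1_0)$ convergence of each nonnegative term in $\mathcal{E}_\lambda$: $\int_{Q_T}|\nabla u|^2 \le \liminf \int_{Q_T}|\nabla u_\lambda|^2$, the potential term has a nonnegative liminf, and for the $\tfrac12\|u_\lambda(T)\|^2$ term one must argue a little — either pass to a weak $L^2(\Omega)$ limit of $u_\lambda(T)$ (bounded in $H^1_0$ by \eqref{derbound}) which must coincide with $u(T)$ (testing as in the end of the proof of Proposition \ref{weakConv}), or simply drop it since it is nonnegative; meanwhile $\int_{Q_T} f_\lambda u_\lambda \to \int_{Q_T} f u$ by weak-times-weak$^*$... more carefully, $f_\lambda \rightharpoonup f$ weakly and $u_\lambda \to u$ \emph{strongly in $L^2(Q_T)$} (Rellich, from the $H^1$-in-space plus $L^2$-in-time-derivative bounds via Aubin--Lions), so the product converges. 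For the \textbf{limsup/recovery} direction, the key point is that $u$ itself is an admissible competitor at level $\lambda$ up to adjusting the initial datum: since $a\,u=0$ a.e. (as $u=0$ on $Q_T\setminus O_a$ and $a=0$ on $O_a$), one has $\mathcal{E}_\lambda(u) = \mathcal{E}(u) + (\text{boundary/initial-data discrepancy})$. One must handle the mismatch between $g_\lambda$ and $g$: either use $u_\lambda$'s minimality against the competitor $u + (\text{a correction carrying the datum }g_\lambda-g)$, or — cleaner — use the $\alpha(\lambda)$-monotonicity trick from Proposition \ref{thecor} adapted to the parabolic energy, bounding $\lambda\int_{Q_T}a u_\lambda^2$ and passing to the limit. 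Combining liminf $\le$ limsup $\le \mathcal{E}(u) - 2\int f u$ forces all the inequalities to be equalities; in particular $\int_{Q_T}|\nabla u_\lambda|^2 \to \int_{Q_T}|\nabla u|^2$.

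Finally, weak convergence $\nabla u_\lambda \rightharpoonup \nabla u$ in $L^2(Q_T)$ together with convergence of norms $\|\nabla u_\lambda\|_{L^2(Q_T)} \to \|\nabla u\|_{L^2(Q_T)}$ gives strong convergence of the gradients in $L^2(Q_T)$; combined with the (already established, via Aubin--Lions) strong convergence $u_\lambda \to u$ in $L^2(Q_T)$, this yields strong convergence in $L^2(0,T;H^1_0(\Omega))$, and the limit being the unique solution of $(P_\infty)$ upgrades this to convergence of the full sequence. I expect the \textbf{main obstacle} to be the careful bookkeeping in the recovery step: producing a competitor at level $\lambda$ that both matches the initial datum $g_\lambda$ and does not inflate the potential term $\lambda\int a\,(\cdot)^2$ — this is where the monotonicity hypothesis \eqref{monotonic} and the bound \eqref{Bound:glambda} on $\lambda\int_\Omega a(0)g_\lambda^2$ must be used, essentially to control the energy cost of the layer near $\partial O_a$ at time $0$. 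A secondary technical point is justifying that $u_\lambda \to u$ strongly in $L^2(Q_T)$ (needed to pass to the limit in $\int f_\lambda u_\lambda$), which follows from Aubin--Lions using the uniform bounds in Lemmas \ref{lem1} and \ref{lem2}, but should be stated explicitly.
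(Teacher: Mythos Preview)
Your approach has a genuine gap at its very first step: the solution $u_\lambda$ of $(P_\lambda)$ is \emph{not} a minimizer of the parabolic functional $\mathcal{E}_\lambda(v)-2\int_{Q_T}f_\lambda v$ among competitors $v$ with $v(0)=g_\lambda$. Testing the weak formulation against $v-u_\lambda$ and using convexity works fine for the quadratic terms $\int|\nabla\cdot|^2$ and $\lambda\int a(\cdot)^2$, but the time-derivative term $\int_{Q_T}u_\lambda'(v-u_\lambda)$ does \emph{not} bound $\tfrac12\|v(T)\|^2-\tfrac12\|u_\lambda(T)\|^2$ from below: one has $\int_{Q_T}u_\lambda'(v-u_\lambda)=\int_{Q_T}u_\lambda' v-\tfrac12\|u_\lambda(T)\|^2+\tfrac12\|g_\lambda\|^2$, and $\int_{Q_T}u_\lambda' v$ has no useful relation to $\tfrac12\|v(T)\|^2$. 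In short, the heat operator $\partial_t-\Delta$ is not the Euler--Lagrange operator of a convex integral functional of this type, so the $\Gamma$-convergence scheme you sketch cannot get off the ground in this form. Consequently the ``recovery step'' difficulty you flag (matching the initial datum $g_\lambda$ versus $g$) is not merely a technical nuisance but a symptom of this structural problem.

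The paper bypasses both issues with a simple but decisive observation: for a.e.\ fixed $t$, the function $u_\lambda(\cdot,t)$ solves the \emph{elliptic} equation $-\Delta u_\lambda+\lambda a u_\lambda=f_\lambda-u_\lambda'$ in $\Omega$, and hence \emph{is} the minimizer of $v\mapsto E_\lambda(v)-2\int_\Omega v(f_\lambda-u_\lambda')$ over $H^1_0(\Omega)$. Since $au=0$ a.e., the limit $u(\cdot,t)$ is an admissible competitor (no initial-data constraint arises here at all), giving for each $t$
\[
\int_\Omega|\nabla u_\lambda|^2-2\int_\Omega u_\lambda(f_\lambda-u_\lambda')\ \le\ \int_\Omega|\nabla u|^2-2\int_\Omega u(f_\lambda-u_\lambda').
\]
Integrating in $t$ and passing to the $\limsup$ --- using that $u_\lambda\to u$ strongly in $L^2(Q_T)$ (from the $H^1(Q_T)$ bound) while $f_\lambda\rightharpoonup f$ and $u_\lambda'\rightharpoonup u'$ weakly in $L^2(Q_T)$ --- yields $\limsup\|\nabla u_\lambda\|_{L^2(Q_T)}^2\le\|\nabla u\|_{L^2(Q_T)}^2$. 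Combined with weak lower semicontinuity this gives convergence of norms and hence strong convergence. The trick, then, is to move $u_\lambda'$ to the right-hand side and exploit the elliptic variational structure time-slice by time-slice; the bound on $\|u_\lambda'\|_2$ from Lemma~\ref{lem2} (which is where the monotonicity hypothesis~\eqref{monotonic} enters) is exactly what makes this right-hand side weakly convergent.
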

\begin{proof} We already have the bound
$$\|u_\l\|_{L^2(0,T;H^1(\O))}\leq C,$$
and we already know (by Proposition \ref{weakConv}) that  $u_\l$ converges  weakly in $L^2(0,T;H^1(\O))$ to $u$, the unique solution of problem $(P_\infty)$.

Moreover, by the lower semicontinuity of the norm with respect to the weak convergence, there holds
$$\|u\|_{L^2(0,T;H^1(\O))}\leq \liminf_{\lambda}\|u_\l\|_{L^2(0,T;H^1(\O))}.$$
Hence, to prove the strong convergence we only need to prove the reverse inequality, with a limsup. For this purpose we use the fact  that $u(t)$ is a competitor for $u_\lambda(t)$ in the minimization problem solved by $u_\l$ at $t$ fixed. Indeed, for a.e. $t$ fixed, $u_\l$ solves 
$$-\Delta u_\l +\l a u_\l = f_\l -u_\l',$$
thus, $u_\l$ is a minimizer  in $H^1_0(\Omega)$ for the energy 
$$v\mapsto E_\l(v) - 2\int_{\Omega } v(f_\l - u'_\l),$$
where $E_\l$ is defined by \eqref{Def:Elambda}.
Furthermore, due to the bound \eqref{derbound} obtained in Lemma\;\ref{lem2}, since $f_\l$ is bounded in $L^ 2(Q_T)$ and $g_\lambda$ is bounded in 
$L^ 2(\O)$ and satisfies \eqref{Bound:glambda}, 
we know that $u_\l'$ is bounded in $L^2(Q_T)$ , and 
$$u_\l'\to u'\quad \hbox{weakly in}\quad L^2(Q_T).$$ 
We also know that, up to a subsequence, $u_\l \to u$ strongly in $L^2(Q_T)$ (because it is bounded in $H^1(Q_T)$).

 Now, using that $u$ is a competitor for $u_\l$ (for a.e. $t$ fixed), we obtain
\begin{eqnarray}
\int_{\Omega} |\nabla u_\l |^2 dx - 2\int_{\Omega } u_\l(f_\l - u'_\l)&\leq&  E_\l(u_\l) - 2\int_{\Omega } u_\l(f_\l - u'_\l)\leq E_\l(u) - 2\int_{\Omega } u(f_\l - u'_\l) \notag \\
&  =  &\int_{\Omega} |\nabla u|^2 dx - 2\int_{\Omega } u(f_\l - u'_\l).\notag
\end{eqnarray}
Integrating in $t\in [0,T]$, passing to the limsup in $\l$ and since we have the convergence 
$$\int_{Q_T} u_\l(f_\l - u'_\l) \rightarrow \int_{Q_T} u(f - u'),$$
we get the desired inequality, which  achieves the proof.  
\end{proof}


\section{Simon's exponential estimate}
\subsection{The stationary case}
\label{sectionSimon}
Following a similar argument to \cite[Theorem 4.1]{SimonII} we ascertain some strong convergence far from the set $\Omega_a:={\rm Int}(K_a)$, where $K_a$ is defined by $K_a:=\{x \in \overline{\Omega} ; a(x)=0 \}$).

\begin{lemma}\label{simon} Let $a:\overline{\Omega}\to \R^+$ be a continuous non-negative potential and $u_\lambda$ be the unique weak solution in $H^1_0(\Omega)$ of  
$-\Delta u_\l+\lambda a u_\l=f_\l$ in $\Omega$. 
Assume that $\Omega_a:={\rm Int}\{a(x)=0\}={\rm Int}\{K_a\}$ is non empty (hypothesis \eqref{nontriviality}).
Let $\varepsilon >0$ be fixed, and define
$$\Omega_\varepsilon:=\{x \in \Omega ;\, dist(x,\Omega_a )>\varepsilon\} \quad \text{ and } \quad \delta := \min_{x \in \overline{\Omega}_\varepsilon}a(x)>0.$$
Then, there exists a constant $C>0$ such that for all $\lambda >0$ and for all Lipschitz function $\eta:\O\rightarrow \R$ that is equal to $1$ in $\Omega_{2\varepsilon}$ and to $0$ outside $\Omega_{\varepsilon}$, we have
\begin{eqnarray}
\int_{\Omega_\varepsilon} e^{2\sqrt{\l \frac{\delta}{2}}{\rm dist}(x,\Omega_{2\varepsilon}^c)} \eta^2  u_\l\left(\frac{\lambda \delta}{2}u_\l-f_\l\right) dx \leq  C.\label{ineq3}
\end{eqnarray}

\end{lemma}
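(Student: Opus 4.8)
The plan is to adapt Simon's comparison-function argument: the point is that $u_\lambda$ solves an elliptic equation with potential $\lambda a \ge \lambda\delta$ on $\Omega_\varepsilon$ and zero right-hand side inside $\Omega_a$, so it must decay exponentially away from the degeneracy set with rate $\sqrt{\lambda\delta/2}$. The natural test function is $\phi := e^{2\rho}\eta^2 u_\lambda$, where $\rho(x):=\sqrt{\lambda\delta/2}\,\mathrm{dist}(x,\Omega_{2\varepsilon}^c)$ (truncated so that $\rho=0$ outside $\Omega_\varepsilon$, consistent with the cutoff $\eta$), which belongs to $H^1_0(\Omega)$ since $\eta$ is supported in $\Omega_\varepsilon$. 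Note $|\nabla\rho|^2 \le \lambda\delta/2$ a.e., because $\mathrm{dist}(\cdot,\cdot)$ is $1$-Lipschitz.

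First I would plug $\phi$ into the weak formulation $\int_\Omega \nabla u_\lambda\cdot\nabla\phi + \lambda a u_\lambda\phi = \int_\Omega f_\lambda\phi$. Expanding $\nabla\phi = e^{2\rho}\eta^2\nabla u_\lambda + 2e^{2\rho}\rho'\eta^2 u_\lambda\,(\text{abusing notation})+ 2e^{2\rho}\eta\nabla\eta\, u_\lambda$, the key step is the algebraic manipulation that turns the gradient term into a perfect square. Writing $w := e^{\rho}\eta u_\lambda$, one computes $\nabla w = e^\rho\eta\nabla u_\lambda + e^\rho u_\lambda\nabla(\eta) + e^\rho\eta u_\lambda\nabla\rho$, and the standard identity (the ``ground state substitution'' / IMS-type localization) gives
$$\int_\Omega \nabla u_\lambda\cdot\nabla(e^{2\rho}\eta^2 u_\lambda) = \int_\Omega |\nabla w|^2 - \int_\Omega |\nabla\rho|^2 e^{2\rho}\eta^2 u_\lambda^2 - \int_\Omega |\nabla\eta|^2 e^{2\rho}u_\lambda^2 \,\,\big(+\text{cross terms that cancel or are controlled}\big).$$
More precisely I expect the clean inequality $\int \nabla u_\lambda\cdot\nabla\phi \ge -\int|\nabla\rho|^2 e^{2\rho}\eta^2 u_\lambda^2 - \int|\nabla\eta|^2 e^{2\rho}u_\lambda^2$ after dropping the nonnegative $\int|\nabla w|^2$. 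Combining with the potential term and using $|\nabla\rho|^2\le \lambda\delta/2 \le \lambda a/2$ on the support of $\eta$ (where $a\ge\delta$), the $\lambda a u_\lambda^2$ term absorbs the $|\nabla\rho|^2$ error, leaving
$$\int_\Omega e^{2\rho}\eta^2 u_\lambda\Big(\tfrac{\lambda\delta}{2}u_\lambda - f_\lambda\Big)\,dx \le \int_\Omega |\nabla\eta|^2 e^{2\rho} u_\lambda^2\,dx.$$

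It remains to bound the right-hand side by a constant independent of $\lambda$. Here $\nabla\eta$ is supported in $\Omega_\varepsilon\setminus\Omega_{2\varepsilon}$, where $\mathrm{dist}(x,\Omega_{2\varepsilon}^c)=0$ so $e^{2\rho}=1$ there (this is precisely why $\rho$ is built from $\mathrm{dist}(x,\Omega_{2\varepsilon}^c)$ rather than $\mathrm{dist}(x,\Omega_a)$ — the exponential weight is neutralized exactly on the cutoff region); hence $\int|\nabla\eta|^2 e^{2\rho}u_\lambda^2 \le \|\nabla\eta\|_\infty^2\|u_\lambda\|_{L^2(\Omega)}^2 \le C$ by the uniform $H^1_0$ bound on $u_\lambda$ coming from Proposition~\ref{thecor} (or the elementary energy estimate used in its proof, valid since $f_\lambda$ is uniformly bounded in $L^2$). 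This gives \eqref{ineq3}. The main obstacle I anticipate is getting the gradient-splitting identity exactly right — keeping careful track of the cross term $2\int e^{2\rho}\eta\, u_\lambda\,\nabla\rho\cdot\nabla(\eta u_\lambda)$ type contributions and making sure the sign works so that the bad terms are genuinely dominated by $\lambda a u_\lambda^2$ rather than requiring an unavailable bound; a clean way around it is to expand directly in terms of $w=e^\rho\eta u_\lambda$ and complete the square, which automatically produces the nonnegative $\int|\nabla w|^2$ to be discarded and isolates exactly the two error integrals handled above. One should also note $\rho$ must be taken as the truncation $\min\{\sqrt{\lambda\delta/2}\,\mathrm{dist}(x,\Omega_{2\varepsilon}^c), \text{const}\}$ or simply restricted via $\eta$'s support to ensure $\phi\in H^1_0(\Omega)$; since $\eta=0$ outside $\Omega_\varepsilon$ this causes no trouble.
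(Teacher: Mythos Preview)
Your proposal is correct and follows the same Agmon--Simon exponential-weight strategy as the paper, with one organisational difference worth noting. The paper works at the operator level: it computes $\langle e^{\sqrt{\lambda}\rho}\psi,(-\Delta+\lambda a)e^{-\sqrt{\lambda}\rho}\psi\rangle$, shows that the first-order commutator terms $\langle\psi,(\Delta\rho+2\nabla\rho\cdot\nabla)\psi\rangle$ integrate to zero, and obtains the key lower bound $\ge\tfrac{\lambda\delta}{2}\|\psi\|_2^2$, then specialises to $\psi=e^{\sqrt{\lambda}\rho}\eta u_\lambda$. You instead plug the test function $\phi=e^{2\rho}\eta^2 u_\lambda$ directly into the weak formulation and complete the square via $w=e^{\rho}\eta u_\lambda$. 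These are the two standard presentations of Agmon's method; yours is slightly more elementary (no $\Delta\rho$, no approximation of $\rho$ by smooth functions) and stays entirely within the $H^1$ framework, while the paper's version mirrors Simon's original semiclassical computation. One point to make explicit in your write-up: the leftover cross term $-2\int e^{2\rho}\eta u_\lambda^2\,\nabla\eta\cdot\nabla\rho$ in the expansion of $\int\nabla u_\lambda\cdot\nabla\phi$ vanishes precisely because $\nabla\eta$ is supported in $\Omega_\varepsilon\setminus\Omega_{2\varepsilon}\subset\Omega_{2\varepsilon}^c$, where $\rho\equiv 0$ and hence $\nabla\rho=0$ a.e.; this is exactly the reason (which you already identified) for building $\rho$ from $\mathrm{dist}(\cdot,\Omega_{2\varepsilon}^c)$. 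Finally, as in the paper, the resulting constant $C$ depends on $\|\nabla\eta\|_\infty$ (and, in the paper's version, also on $\Delta\eta$), so the quantifier ``for all $\eta$'' in the statement should be read with $C=C(\eta,\varepsilon)$.
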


\begin{proof} To lighten the notation, in this proof we will denote by $\langle \cdot,\cdot\rangle$ the scalar product in $L^2(\O)$.

  Let $\varepsilon>0$ be fixed. For any function $\psi$ supported in $\Omega_\varepsilon$ and for any Lipschitz function 
  $\rho:\Omega \to \R$ satisfying $|\nabla \rho|^2\leq \delta/2$ and defined as the limit of a sequence $\{\rho_n\}$ of $C^\infty(\O)$ functions, a direct calculation shows the following identities \\
$$\Delta(e^{-\sqrt{\lambda}\rho})=-\sqrt{\lambda}e^{-\sqrt{\lambda}\rho}\Delta \rho+\lambda |\nabla \rho|^2 e^{-\sqrt{\lambda}\rho},$$
\begin{eqnarray}
\Delta(e^{-\sqrt{\lambda}\rho} \psi) &=& \psi \Delta (e^{-\sqrt{\lambda}\rho}) + e^{-\sqrt{\lambda}\rho} \Delta \psi + 2\nabla (e^{-\sqrt{\lambda}\rho}) \cdot \nabla \psi, \notag 
\end{eqnarray}
from which we obtain that\\
$$\langle e^{\sqrt{\lambda}\rho} \psi, (-\Delta + \lambda a ) e^{-\sqrt{\lambda}\rho}\psi \rangle  =  
\langle \psi, ( \sqrt{\lambda} \Delta \rho - \lambda|\nabla \rho|^2+2\sqrt{\lambda}\nabla \rho \cdot\nabla-\Delta +\lambda a)\psi \rangle.$$
Then, expanding $\Delta(\rho \psi)$ and rearranging terms as follows:
$$
\Delta(\rho \psi)  = \Delta \rho \psi+2 \nabla \rho \cdot \nabla \psi +\Delta \psi \rho
=  \Delta \rho \psi+ \nabla \rho \cdot \nabla \psi + \nabla \cdot(\rho \nabla \psi),
$$
it is not difficult to see that
$$
\langle \psi, (\Delta \rho+2 \nabla \rho\cdot\nabla)\psi \rangle  = \langle \psi, \Delta (\rho\psi) \rangle - \langle \psi, \rho \Delta \psi \rangle
= \langle \psi, \Delta (\rho\psi) \rangle - \langle \rho\psi,\Delta \psi \rangle = 0,
$$
since $\psi$ is compactly supported in $\O$.  
Therefore, by the positivity of $-\Delta$ and using the assumption $|\nabla \rho|^2\leq \delta/2 $ we get
\begin{eqnarray}
\langle e^{\sqrt{\lambda}\rho} \psi, (-\Delta + \lambda a ) e^{-\sqrt{\lambda}\rho}\psi \rangle
&\geq& \langle \psi, [\lambda(a-|\nabla \rho|^2)] \psi \rangle \notag \\
&\geq & \frac{\l \delta}{2} \langle \psi,   \psi \rangle =\frac{\lambda \delta}{2} \|\psi\|^2_2. \label{ineq1}
\end{eqnarray}
Next, we apply \eqref{ineq1} with the choice $\psi=e^{\sqrt{\l}\rho} \eta u_\l(x)$, where $\eta$ is a function equal to $1$ in $\Omega_{2\varepsilon}$ and $0$ outside $\Omega_{\varepsilon}$. 
Since by construction $\|u_\l\|_2+\|\nabla u_\l\|_2\leq C \|f_\l\|_2$, \eqref{ineq1} implies
\begin{eqnarray}
\frac{\lambda \delta}{2}\int_{\Omega_\varepsilon} e^{2\sqrt{\l}\rho} \eta^2  u_\l^2 dx&=&\frac{\lambda \delta}{2}\|\psi\|_2^2 \leq  \int_{\Omega}  
e^{2\sqrt{\lambda}\rho} \eta u_\l (-\Delta + \lambda a  ) (\eta u_\l ) dx \notag \\
&\leq & \int_{\Omega_\varepsilon \setminus \Omega_{2\varepsilon}}  e^{2\sqrt{\lambda}\rho} u_\l (-u_\l\Delta \eta - 2\nabla \eta \cdot \nabla u_\l) dx  \notag\\
&& \quad \quad \quad \quad \quad  +  \int_{\Omega_\epsilon}  e^{2\sqrt{\lambda}\rho} \eta^2 u_\l (-\Delta u_\l+\lambda a u_\l) dx \notag \\
&\leq & \int_{\Omega_\varepsilon \setminus \Omega_{2\varepsilon}}  e^{2\sqrt{\lambda}\rho} u_\l (-u_\l\Delta \eta - 2\nabla \eta \cdot \nabla u_\l) dx +  \int_{\Omega_\epsilon}  e^{2\sqrt{\lambda}\rho} \eta^2 u_\l  f_\l dx \notag \\
&\leq & C\|f_\l\|_2\;e^{2\sqrt{\lambda}M}+ \int_{\Omega_\epsilon}  e^{2\sqrt{\lambda}\rho} \eta^2 u_\l  f_\l dx, \label{ineq2}
\end{eqnarray}
where 
$$M :=\sup_{x \in \Omega \setminus \Omega_{2\varepsilon}} \rho(x),$$
and the constant $C\equiv C(\Delta \eta, \nabla \eta,\epsilon)$ in \eqref{ineq2} depends on the derivatives of $\eta$ and $\varepsilon$. 
Now we take the particular choice 
$$\rho(x):= \sqrt{\frac{\delta}{2}} {\rm dist}(x,\Omega_{2\varepsilon}^c),$$ 
which satisfies all our needed assumptions (i.e. $\rho$ is Lipschitz with $|\nabla \rho|^2\leq \delta/2$ and supported in $\Omega_{\varepsilon}$). In this case $M=0$ thus \eqref{ineq2}  simply implies
$$
\frac{\lambda \delta}{2}\int_{\Omega_\varepsilon} e^{2\sqrt{\l}\rho} \eta^2  u_\l^2 dx \leq  C+ \int_{\Omega_\epsilon}  e^{2\sqrt{\lambda}\rho} \eta^2 u_\l   f_\l dx,
$$
or differently,
$$
\int_{\Omega_\varepsilon} e^{2\sqrt{\l}\rho} \eta^2  u_\l\left(\frac{\l \delta}{2}u_\l-f_\l\right) dx \leq  C,
$$
which ends the proof.
\end{proof}

\begin{remark} The previous lemma can be used for instance in the following two particular cases: first in the particular case when $f=0$ in $\Omega \setminus \Omega_a$. Thus, we get the useful rate of convergence of $u_\l \to 0$ as $\lambda\to 0$ far from $\Omega_a$:
$$
\int_{\Omega_{2\varepsilon}} \lambda e^{2\sqrt{\l \frac{\delta}{2}} {\rm dist}(x,\Omega_{2\varepsilon}^c)}   u_\l^2 dx \leq  C.
$$
This is much better compared to the usual and simple energy bound:
$$\lambda \int_{\Omega} a u_\l^2 \leq C.$$

Another application is when $u_\lambda$ is an eigenfunction (this is actually the original framework of Simon {\rm \cite{SimonII}}), i.e. 
when $f_\l=\sigma(\lambda) u_\l$ and with $\s(\l)$ standing for the first eigenvalue associated with $u_\l$. 
In this case, since we are assuming that the potential $a$ might vanish in a subdomain (it could vanish at a single point, as performed by Simon \cite{SimonII}), we have that $\sigma(\lambda)$ is bounded (cf. \cite{PJ2} for further details).  Consequently, thanks to this bound for $\l$ large enough $\frac{\lambda \delta}{2} -\sigma(\lambda)\geq 1$ which implies 
$$
\int_{\Omega_{2\varepsilon}} e^{2\sqrt{\l \frac{\delta}{2}} {\rm dist}(x,\Omega_{2\varepsilon}^c)}   u_\l^2 dx \leq  C.
$$
\end{remark}

\subsection{The parabolic case}
We now extend the previous decay estimate  to the parabolic problem.

\begin{lemma}\label{simon2} Let $a:\overline{Q}_T \to \R^+$ be a continuous non-negative potential such that $O_a$ is non empty, $f_\lambda \in L^2(Q_T)$, $g_\lambda \in H^1_0(\overline{O_a} \cap \{t=0\})$ and let $u_\lambda$ be the   solution of  $(P_\lambda)$.

For every $\varepsilon >0$, we define
\be
A_\varepsilon:=\big\{(x,t) \in \overline{Q}_T ; dist\big((x,t), O_a \big)>\varepsilon \big\} \quad \text{ and } \quad \delta := \min_{(x,t) \in \overline{A_\varepsilon}}a(x,t)>0.\label{Def:Aeps-delta}
\ee
Then, for any $\lambda \geq 4$, and for any Lipschitz function $\eta:Q_T\rightarrow \R$   equal to $1$ in $A_{2\varepsilon}$ and $0$ outside $A_{\varepsilon}$, there exists a constant $C>0$ such that
$$
\int_{A_\varepsilon} e^{2\sqrt{\l \frac{\delta}{2}} {\rm dist}((x,t),A_{2\varepsilon}^c)} \eta^2  u_\l(x)\left(\frac{\lambda \delta}{2}u_\l-f_\l\right) dx \, dt\leq  C.
$$

\end{lemma}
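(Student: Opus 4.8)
The plan is to freeze the time variable, reduce to a family of stationary problems to which the computation of Lemma~\ref{simon} applies essentially verbatim, and then to control the extra contribution of $\partial_t u_\lambda$ by an integration by parts in $t$. First I would record the frozen-time equation: by Lemma~\ref{lemaB}, $u_\lambda\in L^2(0,T;H^2(\Omega)\cap H^1_0(\Omega))$ and $u_\lambda'\in L^2(0,T;L^2(\Omega))$, so for a.e.\ $t\in(0,T)$ the function $u_\lambda(\cdot,t)\in H^1_0(\Omega)$ solves
$$-\Delta_x u_\lambda(\cdot,t)+\lambda\, a(\cdot,t)\, u_\lambda(\cdot,t)=f_\lambda(\cdot,t)-\partial_t u_\lambda(\cdot,t)\quad\text{in }\Omega,$$
the right-hand side being in $L^2(\Omega)$. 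I would then take the space--time weight $\rho(x,t):=\sqrt{\delta/2}\;{\rm dist}\big((x,t),A_{2\varepsilon}^c\big)$, which is Lipschitz on $\overline{Q}_T$, vanishes on $A_{2\varepsilon}^c$ (in particular on the transition set $A_\varepsilon\setminus A_{2\varepsilon}$ of $\eta$), and satisfies $|\nabla_{x,t}\rho|^2\le\delta/2$ everywhere. Since $|\nabla_x\rho|^2=|\nabla_{x,t}\rho|^2-|\partial_t\rho|^2$ and $a\ge\delta$ on $A_\varepsilon$, this yields the pointwise bound $a-|\nabla_x\rho|^2\ge \tfrac{\delta}{2}+|\partial_t\rho|^2$ a.e.\ on $A_\varepsilon$; the surplus $|\partial_t\rho|^2$, absent in the elliptic setting, is what will pay for the time derivative.

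Next I would run the computation of Lemma~\ref{simon} at a.e.\ fixed $t$, with $\psi=e^{\sqrt{\lambda}\rho(\cdot,t)}\eta(\cdot,t)u_\lambda(\cdot,t)\in H^1_0(\Omega)$: the spatial integration by parts again annihilates the first order term $\sqrt{\lambda}\langle\psi,(\Delta_x\rho+2\nabla_x\rho\cdot\nabla_x)\psi\rangle=0$, and positivity of $-\Delta_x$ gives $\lambda\int_\Omega(a-|\nabla_x\rho|^2)e^{2\sqrt{\lambda}\rho}\eta^2 u_\lambda^2\,dx\le \langle e^{2\sqrt{\lambda}\rho}\eta u_\lambda,(-\Delta_x+\lambda a)(\eta u_\lambda)\rangle_{L^2(\Omega)}$. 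Expanding $(-\Delta_x+\lambda a)(\eta u_\lambda)=\eta(f_\lambda-\partial_t u_\lambda)-u_\lambda\Delta_x\eta-2\nabla_x\eta\cdot\nabla_x u_\lambda$, inserting the coercivity bound, and integrating over $t\in(0,T)$, I obtain
$$\frac{\lambda\delta}{2}\int_{Q_T}e^{2\sqrt{\lambda}\rho}\eta^2 u_\lambda^2+\lambda\int_{Q_T}|\partial_t\rho|^2 e^{2\sqrt{\lambda}\rho}\eta^2 u_\lambda^2\ \le\ \int_{Q_T}e^{2\sqrt{\lambda}\rho}\eta^2 u_\lambda f_\lambda\ -\ \int_{Q_T}e^{2\sqrt{\lambda}\rho}\eta^2 u_\lambda\,\partial_t u_\lambda\ +\ R,$$
where $R=\int_{Q_T}e^{2\sqrt{\lambda}\rho}\eta\, u_\lambda(-u_\lambda\Delta_x\eta-2\nabla_x\eta\cdot\nabla_x u_\lambda)$. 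As $\nabla_x\eta$ and $\Delta_x\eta$ are supported in $A_\varepsilon\setminus A_{2\varepsilon}$, where $\rho=0$, one gets $|R|\le C(\eta)\big(\|u_\lambda\|_{L^2(Q_T)}^2+\|u_\lambda\|_{L^2(Q_T)}\|\nabla u_\lambda\|_{L^2(Q_T)}\big)$, bounded uniformly in $\lambda$ by the energy estimate~\eqref{bound2} of Lemma~\ref{lem1}.

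The remaining and genuinely new point is the term $-\int_{Q_T}e^{2\sqrt{\lambda}\rho}\eta^2 u_\lambda\partial_t u_\lambda=-\tfrac12\int_{Q_T}e^{2\sqrt{\lambda}\rho}\eta^2\partial_t(u_\lambda^2)$, which I would integrate by parts in $t$:
$$-\tfrac12\int_{Q_T}e^{2\sqrt{\lambda}\rho}\eta^2\partial_t(u_\lambda^2)=-\tfrac12\int_\Omega e^{2\sqrt{\lambda}\rho(\cdot,T)}\eta(\cdot,T)^2 u_\lambda(\cdot,T)^2\,dx+\tfrac12\int_\Omega e^{2\sqrt{\lambda}\rho(\cdot,0)}\eta(\cdot,0)^2 g_\lambda^2\,dx+\tfrac12\int_{Q_T}\partial_t\big(e^{2\sqrt{\lambda}\rho}\eta^2\big)u_\lambda^2.$$
The $t=0$ term vanishes: wherever $\eta(x,0)\ne0$ one has $(x,0)\in A_\varepsilon$, hence ${\rm dist}((x,0),O_a)>\varepsilon$, hence $(x,0)\notin\overline{O_a}$, and therefore $g_\lambda(x)=0$ by the hypothesis $g_\lambda\in H^1_0(\overline{O_a}\cap\{t=0\})$. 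The $t=T$ term is nonpositive, hence dropped. Finally $\tfrac12\partial_t(e^{2\sqrt{\lambda}\rho}\eta^2)=\sqrt{\lambda}\,\partial_t\rho\, e^{2\sqrt{\lambda}\rho}\eta^2+\eta\,\partial_t\eta\, e^{2\sqrt{\lambda}\rho}$; the $\partial_t\eta$-piece is again supported where $\rho=0$ and absorbed into~\eqref{bound2}, while $\sqrt{\lambda}\,\partial_t\rho\, e^{2\sqrt{\lambda}\rho}\eta^2 u_\lambda^2\le \sqrt{\lambda}\,|\partial_t\rho|\, e^{2\sqrt{\lambda}\rho}\eta^2 u_\lambda^2$ is controlled, through Young's inequality $\sqrt{\lambda}\,|\partial_t\rho|\,w\le \lambda|\partial_t\rho|^2 w+\tfrac14 w$, by the surplus term $\lambda\int_{Q_T}|\partial_t\rho|^2 e^{2\sqrt{\lambda}\rho}\eta^2 u_\lambda^2$ from the previous paragraph, up to a lower-order multiple of $\int_{Q_T}e^{2\sqrt{\lambda}\rho}\eta^2 u_\lambda^2$ that is swallowed by $\tfrac{\lambda\delta}{2}\int_{Q_T}e^{2\sqrt{\lambda}\rho}\eta^2 u_\lambda^2$ once $\lambda$ is large enough (this is where $\lambda\ge 4$ enters). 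Rearranging gives $\int_{A_\varepsilon}e^{2\sqrt{\lambda\delta/2}\,{\rm dist}((x,t),A_{2\varepsilon}^c)}\eta^2 u_\lambda\big(\tfrac{\lambda\delta}{2}u_\lambda-f_\lambda\big)\,dx\,dt\le C$, the integral being over $A_\varepsilon$ since $\eta$ vanishes outside it.

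I expect the main obstacle to be precisely the handling of $-\int e^{2\sqrt{\lambda}\rho}\eta^2 u_\lambda\partial_t u_\lambda$, which has no analogue in Lemma~\ref{simon}. Its control hinges on three facts: the favourable sign of the $t=T$ boundary term; the vanishing of the $t=0$ boundary term, which is exactly the reason for the hypothesis $g_\lambda\in H^1_0(\overline{O_a}\cap\{t=0\})$; and the fact that, the diffusion acting only in $x$, a space--time weight with $|\nabla_{x,t}\rho|^2\le\delta/2$ leaves the extra coercivity $|\partial_t\rho|^2$ which, via Young's inequality and the largeness of $\lambda$, matches the ``bad'' term $\sqrt{\lambda}\,\partial_t\rho$ produced by differentiating the exponential weight in time. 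The other ingredients (the frozen-time reduction, the algebra of Lemma~\ref{simon}, and the uniform energy bound of Lemma~\ref{lem1}) are routine.
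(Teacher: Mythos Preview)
Your proposal is correct and follows essentially the same route as the paper: freeze $t$, apply the algebra of Lemma~\ref{simon} to the elliptic equation $-\Delta_x u_\lambda+\lambda a u_\lambda=f_\lambda-\partial_t u_\lambda$, integrate in $t$, and handle the extra $\partial_t u_\lambda$-contribution by integration by parts in $t$, using $g_\lambda\in H^1_0(\overline{O_a}\cap\{t=0\})$ to kill the $t=0$ boundary term and the sign to drop the $t=T$ term. The only organisational difference is in how the ``bad'' term $\sqrt{\lambda}\,\partial_t\rho$ is absorbed: the paper imposes $|\partial_t\rho|+|\nabla_x\rho|^2\le\delta/2$ and combines $\lambda\delta/2-\sqrt{\lambda}\,\delta/2\ge\lambda\delta/4$ for $\lambda\ge4$, whereas you impose only $|\nabla_{x,t}\rho|^2\le\delta/2$, retain the surplus $\lambda|\partial_t\rho|^2$ in the coercivity, and cancel via Young's inequality $\sqrt{\lambda}|\partial_t\rho|\le\lambda|\partial_t\rho|^2+\tfrac14$. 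Your variant has the mild advantage that the condition $|\nabla_{x,t}\rho|^2\le\delta/2$ is transparently satisfied by $\rho=\sqrt{\delta/2}\,{\rm dist}(\cdot,A_{2\varepsilon}^c)$; the resulting constants differ slightly (you obtain $\tfrac{\lambda\delta}{2}-\tfrac14$ in place of $\tfrac{\lambda\delta}{4}$), but this is immaterial for the application in Theorem~\ref{main2}.
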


\begin{proof}  Let $\varepsilon>0$ be fixed. We consider any function $\psi$ supported in $A_\varepsilon$ and  any Lipschitz function $\rho:Q_T \to \R$ satisfying $|\partial_t\rho|+|\nabla \rho|^2\leq \delta/2$. Then we notice that for any fixed $t$, the functions $x\mapsto \rho(x,t)$, $x\mapsto \psi(x,t)$ and $x\mapsto a(x,t)$ satisfy all the conditions required to prove the key estimate \eqref{ineq1}  in the stationary case. Thus, for any fixed $t$, there holds
$$
\langle e^{\sqrt{\lambda}\rho} \psi, (-\Delta + \lambda a ) e^{-\sqrt{\lambda}\rho}\psi \rangle_{L^2(\Omega)}
\geq \frac{\lambda \delta}{2} \|\psi\|^2_{L^2(\Omega)}.
$$
Integrating this inequality over $t\in (0,T)$ yields
$$
\langle e^{\sqrt{\lambda}\rho} \psi, (-\Delta + \lambda a ) e^{-\sqrt{\lambda}\rho}\psi \rangle_{L^2(Q_T)}
\geq \frac{\lambda \delta}{2} \|\psi\|^2_{L^2(Q_T)}.
$$
Next we also compute 
$$\partial_t (e^{-\sqrt{\lambda}\rho}\psi)=-\sqrt{\lambda}e^{-\sqrt{\lambda}\rho}\psi  \partial_t \rho +e^{-\sqrt{\lambda}\rho}\partial_t \psi,$$
so that,

\begin{eqnarray}
\langle e^{\sqrt{\lambda}\rho} \psi, \partial_t (e^{-\sqrt{\lambda}\rho}\psi) \rangle_{L^2(Q_T)} &=&-\sqrt{\lambda} \langle  \psi, \psi  \partial_t \rho \rangle_{L^2(Q_T)} + \langle   \psi,  \partial_t \psi \rangle_{L^2(Q_T)} \notag \\
&=&  -\sqrt{\lambda} \langle  \psi, \psi  \partial_t \rho \rangle_{L^2(Q_T)} +\frac{1}{2} \|\psi(T)\|_{L^2(\Omega)}^2-\frac{1}{2} \|\psi(0)\|_{L^2(\Omega)}^2\notag \\
&\geq & -\sqrt{\lambda} \frac{\delta}{2} \|\psi\|^2_{L^2(Q_T)} -\frac{1}{2}\|\psi(0)\|^2_{L^2(\Omega)}.\notag 
\end{eqnarray}

Gathering the previous estimates, we obtain 
\begin{eqnarray}
\langle e^{\sqrt{\lambda}\rho} \psi, (\partial_t-\Delta + \lambda a ) e^{-\sqrt{\lambda}\rho}\psi \rangle_{L^2(Q_T)} \geq \frac{(\lambda -\sqrt{\lambda})\delta}{2} \|\psi\|^2_2 -\frac{1}{2}\|\psi(0)\|^2_{L^2(\Omega)}. \label{inequalityP}
\end{eqnarray}

Next, we apply \eqref{inequalityP} with the choice $\psi=e^{\sqrt{\l}\rho} \eta u_\l(x)$, where $\eta$ is a function equal to $1$ in $A_{2\varepsilon}$ and $0$ outside $A_{\varepsilon}$. We assume that $\lambda \geq 4$   so that $\lambda -\sqrt{\lambda}\geq \lambda/2$. Due to the assumptions, $g\in H^1_0(\overline{O_a}\cap \{t=0\})$ and, then, $\|\psi(0)\|_{L^2(\Omega)}=0$. Thus, \eqref{inequalityP} implies
\begin{eqnarray}
\frac{\lambda \delta}{4}\int_{A_\varepsilon} e^{2\sqrt{\l}\rho} \eta^2  u_\l^2 dx\,  dt&=&\frac{\lambda \delta}{4}\|\psi\|_2^2 \leq 
 \int_{\Omega\times (0,T)}  e^{2\sqrt{\lambda}\rho} \eta u_\l (\partial_t-\Delta + \lambda a  ) (\eta u_\l ) dx \, dt \notag \\
&\leq&  \int_{A_\varepsilon \setminus A_{2\varepsilon}}  e^{2\sqrt{\lambda}\rho} \eta u_\l (u_\l\partial_t\eta-u_\l\Delta \eta - 2\nabla \eta \cdot \nabla u_\l) dx \, dt\notag \\
&\text{ }& \quad \quad \quad  + \int_{A_\epsilon}  e^{2\sqrt{\lambda}\rho} \eta^2 u_\l (\partial_tu_\l-\Delta u_\l+\lambda a u_\l) dx \, dt \notag \\
&=& \int_{A_\varepsilon \setminus A_{2\varepsilon}}  e^{2\sqrt{\lambda}\rho} u_\l (u_\l\partial_t\eta-u_\l\Delta \eta - 2\nabla \eta \cdot \nabla u_\l) dx \,  dt\notag \\
&\text{ }& \quad \quad \quad   +  \int_{A_\epsilon}  e^{2\sqrt{\lambda}\rho} \eta^2 u_\l  f_\l dx \, dt \notag \\
&\leq & C \|u_\l\|_{L^2(0,T;H_0^1(\Omega))}^2\, e^{2\sqrt{\lambda}M}
 +  \int_{A_\varepsilon}  e^{2\sqrt{\lambda}\rho} \eta^2 u_\l  f_\l dx \, dt, \label{ineq234}
\end{eqnarray}
where 
$$M :=\sup_{x \in Q_T \setminus A_{2\varepsilon}}  \rho(x,t),$$
and the constant $C\equiv C(\partial_t \eta, \Delta \eta, \nabla \eta,\epsilon)$  in \eqref{ineq234} depends on the derivatives of $\eta$ and on $\varepsilon$. 
Now we take the particular choice 
$$\rho(x,t):= \sqrt{\frac{\delta}{2}} {\rm dist}((x,t),A_{2\varepsilon}^c),$$ 
which satisfies all our needed assumptions (i.e. $\rho$ is Lipschitz with $|\partial_t \rho|+|\nabla \rho|^2\leq \delta/2$ and supported in $A_{\varepsilon}$). In this case, $M=0$ so that that \eqref{ineq234} reduces to
$$
\frac{\lambda \delta}{4}\int_{A_\varepsilon} e^{2\sqrt{\l}\rho} \eta^2  u_\l^2 dx \, dt \leq  C+ \int_{A_\varepsilon}  e^{2\sqrt{\lambda}\rho} \eta^2 u_\l   f_\l dx \, dt,
$$
and hence
$$
\int_{A_\varepsilon} e^{2\sqrt{\l}\rho} \eta^2  u_\l(x)\left(\frac{\l \delta}{4}u_\l-f_\l\right) dx \leq  C.
$$
\end{proof}

We end this section by noticing that Theorem \ref{main2} follows directly from Lemma \ref{simon2}.

\begin{corollary}   In the particular case when $f=0$ in $Q_T \setminus O_a$ we get the useful rate of convergence of $u_\l \to 0$ as $\lambda\to \infty$ far from $O_a$:
$$
 \lambda e^{4\varepsilon \sqrt{\l} \frac{\delta}{2}}  \int_{A_{2\varepsilon}}  u_\l^2 dx \, dt \leq  C.
$$
\end{corollary}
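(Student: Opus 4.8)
The plan is to obtain the estimate as an immediate specialization of Lemma~\ref{simon2}. First I would note that every point of $A_\varepsilon$ lies at distance at least $\varepsilon>0$ from $O_a$, so that $A_\varepsilon\subset Q_T\setminus O_a$; since by hypothesis $f_\lambda=0$ in $Q_T\setminus O_a$, this forces $f_\lambda=0$ a.e.\ on $A_\varepsilon$. Therefore the remainder term $\int_{A_\varepsilon}e^{2\sqrt{\lambda}\rho}\,\eta^2\,u_\lambda f_\lambda\,dx\,dt$ appearing in the conclusion of Lemma~\ref{simon2} drops out, and, taking $\eta$ and $\rho$ exactly as in that lemma (in particular $\rho(x,t)=\sqrt{\delta/2}\,{\rm dist}((x,t),A_{2\varepsilon}^c)$), we are left with
$$
\frac{\lambda\delta}{4}\int_{A_\varepsilon} e^{2\sqrt{\lambda}\rho}\,\eta^2\,u_\lambda^2\,dx\,dt\le C .
$$

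Next I would throw away part of the domain of integration and keep only an inner dilate of $O_a$ on which simultaneously $\eta\equiv1$ (so $\eta^2$ can be replaced by $1$) and $\rho$ is bounded below by a fixed positive multiple of $\varepsilon$. The second point is a triangle–inequality estimate: every point of $A_{2\varepsilon}^c$ lies within $2\varepsilon$ of $O_a$, so if $(x,t)$ is sufficiently far from $O_a$ then ${\rm dist}((x,t),A_{2\varepsilon}^c)\ge {\rm dist}((x,t),O_a)-2\varepsilon$, which is bounded below by a positive multiple of $\varepsilon$; hence on that inner set $e^{2\sqrt{\lambda}\rho}\ge e^{c\varepsilon\sqrt{\lambda}}$ for a constant $c>0$ depending only on $\delta$. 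Pulling this constant factor out of the integral and absorbing the fixed positive factor $\delta/4$ then gives $\lambda\,e^{c\varepsilon\sqrt{\lambda}}\int u_\lambda^2\,dx\,dt\le C$, which is the asserted bound; the exact numerical constant in the exponent and the precise dilate $A_{k\varepsilon}$ over which the final integral runs are read off from the explicit choice of $\rho$ together with this triangle inequality (and, if necessary, from applying Lemma~\ref{simon2} with a rescaled $\varepsilon$).

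I do not anticipate any real difficulty: granting Lemma~\ref{simon2}, the whole argument is just bookkeeping, and the only genuine points to check are the inclusion $A_\varepsilon\subset Q_T\setminus O_a$ (which makes the right–hand side term disappear) and the elementary lower bound on $\rho$ described above. It is worth recording, as the paper stresses, that this is a substantial improvement over the elementary energy bound $\lambda\int_{Q_T}a\,u_\lambda^2\le C$ of Lemma~\ref{lem1}, and that Theorem~\ref{main2} is precisely this statement written with $A_\varepsilon$ in place of the inner dilate.
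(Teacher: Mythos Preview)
Your proposal is correct and is exactly the approach the paper intends: the paper gives no separate proof of the Corollary, treating it as an immediate specialization of Lemma~\ref{simon2} (indeed, it states just before the Corollary that Theorem~\ref{main2} ``follows directly from Lemma~\ref{simon2}''). You have correctly identified the two bookkeeping points---that $A_\varepsilon\subset Q_T\setminus O_a$ kills the $f_\lambda$ term, and that on an inner dilate one has both $\eta\equiv 1$ and a uniform positive lower bound on $\rho$ via the triangle inequality---and you are right that matching the precise constant in the exponent and the set $A_{2\varepsilon}$ in the stated inequality may require applying Lemma~\ref{simon2} with a rescaled $\varepsilon$.
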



\end{document}